\def\nc{\newcommand}
\def\om{\omega}
\nc\pa{\partial}
\def\longequals{\mathbin{=\kern-2pt=}}
\nc\CC{\mathbb{C}}
\nc\RR{\mathbb{R}}
\nc\QQ{\mathbb{Q}}
\nc\ZZ{\mathbb{Z}}
\nc\NN{\mathbb{N}}
\nc{\supp}{\mathop{\mathrm{supp}}}
\nc{\ts}[1]{\widetilde{{#1}}}
\nc\m[1]{\left| #1\right|}
\nc\norm[1]{\left\| #1\right\|}
\nc\dH[1]{\dot{H}^{#1}}
\newcommand{\Lp}[2]{L^{#1}\left(#2\right)}
\newcommand{\Ci}[1]{\mathcal{C}^{\infty}_{0}(#1)}
\def\longequals{\mathbin{=\kern-2pt=}}
\newtheorem{theorem}{Theorem}[section]
\newtheorem{lemma}[theorem]{Lemma}
\newtheorem{corollary}[theorem]{Corollary}
\newtheorem{proposition}[theorem]{Proposition}
\newtheorem{definition}[theorem]{Definition}% Use {\rm ...}
\numberwithin{equation}{section}
\newcommand{\ds}{\displaystyle}
\begin{document}

\title[Th 3D Navier-Stokes equations]
{Local energy bounds and $\epsilon$-regularity criteria for the 3D Navier-Stokes  system}

\author[Cristi Guevara]{Cristi Guevara}
\address{Department of Mathematics,
Louisiana State University,
303 Lockett Hall, Baton Rouge, LA 70803, USA.}
\email{cguevara@lsu.edu}

\author[Nguyen Cong Phuc]
{Nguyen Cong Phuc$^*$}
\address{Department of Mathematics,
Louisiana State University,
303 Lockett Hall, Baton Rouge, LA 70803, USA.}
\email{pcnguyen@math.lsu.edu}

\thanks{$^*$Supported in part by Simons Foundation, award number: 426071}

\thanks{MSC 2010: primary 35Q30; secondary 35Q35}

\begin{abstract} The system of three dimensional Navier-Stokes equations is considered. We obtain some new local energy bounds that enable us to improve several $\epsilon$-regularity criteria. They key idea here is to view the `head pressure' as a signed distribution belonging to certain fractional Sobolev space of negative order. This allows us to capture the oscillation 
of the pressure  in our criteria.
\end{abstract}

\maketitle

\section{Introduction}%\label{Introduction}
We are concerned with the three dimensional Navier-Stokes system
\begin{equation}\label{NSE}
\partial_t u -\Delta u + u\cdot\nabla u + \nabla p=0, \quad {\rm div}\, u=0,
\end{equation}
where $u=u(x,t)=(u_1(x,t), u_2(x,t), u_3(x,t))$ is the velocity of the fluid and the scalar function $p=p(x,t)$ is its pressure.
The system \eqref{NSE} also comes with certain boundary and initial conditions but we shall not specify them here.

Since the seminal work of Leray \cite{Ler} and Hopf \cite{Hop}, it is known that  there exist global in time weak solutions with finite energy to the initial-boundary value problem associated to \eqref{NSE}. Such solutions are now called Leray-Hopf weak solutions. However, the questions of regularity and uniqueness of  
Leray-Hopf weak solutions are still unresolved.

To investigate the regularity of system \eqref{NSE}, in the fundamental paper \cite{CKN}, Caffarelli-Kohn-Nirenberg introduced the notion of suitable weak solutions. They obtained existence as well as partial regularity  for suitable weak solutions. Their fundamental result states that the one-dimensional parabolic Hausdorff measure of the possible singular set of 
suitable weak solutions is zero (see also \cite{CL}). The proof of this partial regularity result is based on the following $\epsilon$-regularity criterion: there is an $\epsilon>0$ such that if $u$ is a suitable weak solution in $Q_1=B_1(0)\times(-1,0)$ and satisfies
\begin{equation*}
\limsup_{r\rightarrow 0} \frac 1 r \int_{Q_r} |\nabla u|^2 dy ds\leq \epsilon,
\end{equation*}
then $u$ is regular at the point $(0,0)$, i.e., $u\in L^\infty(Q_r)$ for some $r>0$. Here we write $Q_r=B_r(0)\times(-r^2,0)$. 

In turn the proof of this $\epsilon$-regularity criterion is based on another one that involves both $u$ and $p$ but requires the smallness at only one scale:
\begin{theorem}[\cite{CKN}]\label{q=3} There exists an $\epsilon>0$ such that if $u$ is a suitable weak solution in $Q_1$ and satisfies 
\begin{equation}\label{Linsmall}
\int_{Q_1} (|u|^3 + |p|^{3/2}) dy ds\leq \epsilon,
\end{equation}
then $u\in  L^\infty(Q_{1/2})$. 
\end{theorem}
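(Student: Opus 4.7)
The plan is to establish the theorem via the classical Caffarelli--Kohn--Nirenberg iteration scheme, controlling a finite family of scale-invariant quantities associated with $u$ and $p$ over parabolic cylinders $Q_r$. Set
\[
A(r) = \esssup_{-r^2 < t < 0} \frac{1}{r}\int_{B_r}|u|^2\,dx, \quad \delta(r) = \frac{1}{r}\int_{Q_r}|\nabla u|^2\,dy\,ds,
\]
\[
G(r) = \frac{1}{r^2}\int_{Q_r}|u|^3\,dy\,ds, \quad K(r) = \frac{1}{r^2}\int_{Q_r}|p|^{3/2}\,dy\,ds.
\]
These quantities are invariant under the natural scaling of \eqref{NSE}, and the hypothesis \eqref{Linsmall} says $G(1)+K(1) \le \epsilon$.

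First, I would use the local energy inequality satisfied by suitable weak solutions: for a smooth cutoff $\phi$ supported in a slightly larger cylinder,
\[
\int |u|^2\phi + 2\int|\nabla u|^2 \phi \le \int |u|^2(\partial_t\phi + \Delta\phi) + \int (|u|^2 + 2p)\, u\cdot\nabla\phi.
\]
Choosing $\phi$ as a standard parabolic cutoff between $Q_{r/2}$ and $Q_r$, and applying H\"older, this yields a bound of the form
\[
A(r/2) + \delta(r/2) \;\lesssim\; G(r)^{2/3} + G(r)^{1/3} K(r)^{2/3}.
\]
Together with the interpolation $G(r) \lesssim A(r)^{3/4}\bigl(A(r)+\delta(r)\bigr)^{3/4}$ (a consequence of $L^3_{t,x} \hookrightarrow L^\infty_t L^2_x \cap L^2_t L^6_x$ combined with Sobolev), the right-hand side can be absorbed if $K(r)$ is small.

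The second, and most delicate, step is the decay estimate for $K$. Since $p$ is determined up to a harmonic function by $-\Delta p = \partial_i\partial_j(u_iu_j)$, for each fixed $t$ I would decompose on $B_r$:
\[
p = p_1 + p_2, \qquad -\Delta p_1 = \partial_i\partial_j(u_iu_j\chi_{B_{r/2}}), \quad \Delta p_2 = 0 \text{ in } B_{r/2}.
\]
Calder\'on--Zygmund gives $\|p_1\|_{L^{3/2}} \lesssim \|u\|_{L^3}^2$, while the harmonic function $p_2$ enjoys interior decay $\|p_2\|_{L^{3/2}(B_{\theta r})} \lesssim \theta^{d}\|p_2\|_{L^{3/2}(B_{r/2})}$ for some $d$. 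Combining these yields, for any $\theta \in (0,1/4)$,
\[
K(\theta r) \;\lesssim\; \theta^{-2} G(r) + \theta^{\alpha} K(r),
\]
for some $\alpha>0$. Pairing this with the analogous bound
\[
G(\theta r) \;\lesssim\; \theta^{-2}\bigl(A(r)+\delta(r)\bigr)^{3/2}
\]
gives, after invoking the first step, a closed inequality of the schematic form
\[
\Phi(\theta r) \le C\theta^{\alpha}\Phi(r) + C\theta^{-\beta}\Phi(r)^{3/2},
\]
where $\Phi := G + K$. Choosing $\theta$ small so $C\theta^\alpha < 1/2$ and then requiring $\epsilon$ small (relative to $\theta$), one obtains $\Phi(\theta^k) \le 2^{-k}\Phi(1)$ by induction.

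The main obstacle is the pressure step: $p$ is nonlocal, so the natural harmonic/Calder\'on--Zygmund splitting must be redone on every ball, and one must be careful that the ``far'' contribution from outside $B_{r/2}$, encoded in $p_2$, is controlled using only the available quantities at scale $r$, not at larger scales. Once the iteration closes, the smallness $G(r)+K(r) \le C r^\alpha$ for all small $r$ is (by the single-scale Caffarelli--Kohn--Nirenberg $\epsilon$-regularity criterion already known, or by a direct bootstrap using Serrin-type arguments on the heat equation with forcing $u\cdot\nabla u + \nabla p$) more than enough to conclude $u \in L^\infty(Q_{1/2})$.
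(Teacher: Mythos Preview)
The paper does not give its own proof of this theorem; it is quoted as a known result from \cite{CKN}. What the paper does say (just after stating Theorem~\ref{VassThm}) is that Theorem~\ref{q=3} follows from Vasseur's criterion via the generalized energy inequality and a covering argument, and of course the paper's own Theorem~\ref{VassThmp=1} yields it in the same way. So the route implicit in the paper is: from $\int_{Q_1}(|u|^3+|p|^{3/2})\le\epsilon$ deduce smallness of $A+B+\int_{-1}^0\|p\|_{L^1(B_1)}\,ds$ on a slightly smaller cylinder, and then invoke Theorem~\ref{VassThmp=1}. Your plan, by contrast, is the classical CKN iterative scheme itself. The ingredients you list (local energy inequality, the interpolation for $G$, the Calder\'on--Zygmund/harmonic splitting of the pressure) are exactly right, and they are the same ingredients the paper uses in its proof of Theorem~\ref{VassThmp=1}.

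That said, there are two genuine gaps in your sketch. First, the ``schematic'' closed inequality
\[
\Phi(\theta r)\le C\theta^{\alpha}\Phi(r)+C\theta^{-\beta}\Phi(r)^{3/2},\qquad \Phi=G+K,
\]
does \emph{not} follow from the estimates you wrote down. Substituting your own bound $A+\delta\lesssim \Phi^{2/3}$ into $G(\theta r)\lesssim \theta^{-2}(A+\delta)^{3/2}$ gives $G(\theta r)\lesssim \theta^{-2}\Phi$, a \emph{linear} term with a negative power of $\theta$; together with $K(\theta r)\lesssim \theta^{-2}G(r)+\theta^{\alpha}K(r)$ this only yields $\Phi(\theta r)\lesssim \theta^{-2}\Phi(r)$, which cannot be iterated. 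The actual CKN induction (and the paper's proof of Theorem~\ref{VassThmp=1}) does not reduce to a single scalar recursion in $G+K$; it tracks $A$ and $B$ alongside the pressure quantity and proves \emph{uniform smallness} of $A(\theta^n)+B(\theta^n)$ over all $n$ by induction, handling the pressure through an auxiliary decaying quantity (see the treatment of $U(k)$ in the proof of Theorem~\ref{VassThmp=1}). Your simplification discards precisely the structure that makes the loop close.

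Second, the endgame is circular: the ``single-scale Caffarelli--Kohn--Nirenberg $\epsilon$-regularity criterion already known'' is exactly Theorem~\ref{q=3}, the statement you are proving. To finish non-circularly you need a different terminal criterion, for instance the Seregin--\v{S}ver\'ak condition $\sup_{0<r<R_\star}A(z_0,r)\le\epsilon_\star$ (Lemma~\ref{SS-02} in the paper), which is what the paper uses; your alternative ``direct bootstrap via Serrin-type arguments'' would also work but has to be spelled out, since it is doing real work.
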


Theorem \ref{q=3} was first proved in \cite[Proposition 1]{CKN} in a slightly more general form, namely, the smallness condition \eqref{Linsmall} is replaced by the condition
\begin{equation}\label{CKNsmall}
\int_{Q_1} (|u|^3 + |p| |u|) dy ds + \int_{-1}^{0} \norm{p}_{L^1(B_1(0))}^{\frac 54}ds \leq \epsilon.
\end{equation}

The proof presented in \cite{CKN} is based on an inductive argument that goes back to Scheffer \cite{Sche2}. Later Lin \cite[Theorem 3.1]{Lin} gave a new proof based on a compactness argument. In fact, he showed that under \eqref{Linsmall} the solution is H\"older continuous with respect to the space-time parabolic metric on the closure of $B_{1/2}(0)\times(-1/4,0)$. See also  \cite[Lemma 3.1]{LS}. We mention that Theorem \ref{q=3} has also been used as an important tool in many other papers such as \cite{NRS, GKT, ESS1, WZ, Phuc}, etc.

A more constructive approach to Theorem \ref{q=3} can be found in \cite{Vass} in which Vasseur used De Giorgi iteration technique to obtain it in the following form.
\begin{theorem}[\cite{Vass}] \label{VassThm}
For each $p>1$ there exists an $\epsilon(p)>0$ such that if $u$ is a suitable weak solution in $Q_1$ and satisfies 
\begin{equation*}%\label{Vasssmall}
 \sup_{t\in[-1,0]} \int_{B_1(0)} |u(x,t)|^2 dx + \int_{Q_1} |\nabla u|^2 dy ds +\int_{-1}^0 \norm{p}_{L^1(B_1(0))}^p ds   \leq \epsilon(p),
\end{equation*}
then $u\in  L^\infty(Q_{1/2})$. 
\end{theorem}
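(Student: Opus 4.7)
\smallskip
\noindent\emph{Proof proposal.}\quad The plan is to implement a De~Giorgi truncation/iteration argument, using the three smallness hypotheses jointly as the base case of a nonlinear recursion that forces $|u|$ into $L^\infty(Q_{1/2})$. Introduce a shrinking family of parabolic cylinders $Q_{r_k}=B_{r_k}(0)\times(-r_k^2,0)$ with $r_k=\tfrac12+2^{-k-1}$, smooth cutoffs $\phi_k$ equal to $1$ on $Q_{r_{k+1}}$ and vanishing outside $Q_{r_k}$, and an increasing sequence of truncation levels $c_k=c(1-2^{-k})$, where $c>0$ is a free terminal parameter. Setting $v_k=(|u|^2-c_k^2)_{+}^{1/2}$, the object to iterate is the nonnegative number
\[
U_k \;=\; \esssup_{-r_k^2<t<0}\int_{B_{r_k}} \phi_k^2\,v_k^2\,dx \;+\; \int\!\!\int_{Q_{r_k}}\phi_k^2\,|\nabla v_k|^2\,dy\,ds.
\]
Testing the suitable-weak-solution local energy inequality against a smooth approximation of $\phi_k^2\,\chi_{\{|u|>c_k\}}$ produces an inequality controlling $U_{k+1}$ by integrals over the super-level set $E_k=\{|u|>c_k\}\cap Q_{r_k}$ of the convective flux $|u|^2\,u\cdot\nabla\phi_k^2$ and of the pressure--velocity product $p\,u\cdot\nabla\phi_k^2$.

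Two nonlinear terms must be estimated. The convective term is handled by combining Sobolev embedding $H^1\hookrightarrow L^6$ on $\RR^3$ with H\"older and a Chebyshev-type bound $|E_{k+1}|\,(c_{k+1}^2-c_k^2)\le\int_{E_k} v_k^2$, giving a contribution of the form $C\,4^k\,U_k^{1+\beta_1}$ with $\beta_1>0$. The pressure term requires the standard harmonic/Riesz splitting $p=p_1+p_2$ inside $B_1$, where $-\Delta p_1=\partial_i\partial_j(\chi\,u_iu_j)$ on $\RR^3$ for a spatial cutoff $\chi$ adapted to $B_1$, so that Calder\'on--Zygmund gives $\|p_1\|_{L^q_x}\les\|u\|_{L^{2q}_x(B_1)}^2$ for $1<q<\infty$, while $p_2=p-p_1$ is harmonic on $B_1$, whence interior estimates bound its $L^\infty_x$-norm on the support of $\nabla\phi_k$ by its local $L^1_x$-average, itself controlled by $\|p\|_{L^1(B_1)}+\|p_1\|_{L^1(B_1)}$. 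The $p_1$-contribution pairs with $u$ on $E_k$ and reabsorbs into a bound of the form $C\,4^k\,U_k^{1+\beta_1'}$; for the $p_2$-contribution, H\"older in time against the hypothesis gives
\[
\Bigl|\int\!\!\int 2p_2\,u\cdot\nabla\phi_k^2\Bigr|\;\le\;C\,\|p\|_{L^p_t L^1_x(B_1)}\cdot\bigl(\text{factor involving } |E_k|\bigr),
\]
and here the assumption $p>1$ is decisive: it turns the H\"older factor into a strictly positive power of $|E_k|$, which via Chebyshev becomes $U_k^{\beta_2}$ with $\beta_2>0$ (the endpoint $p=1$ would yield $\beta_2=0$).

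Combining the two contributions produces the recursion
\[
U_{k+1}\;\le\;C(c,p)\,4^k\,U_k^{1+\beta},\qquad \beta=\min(\beta_1,\beta_1',\beta_2)>0,
\]
so a standard De~Giorgi lemma yields $U_k\to 0$ whenever $U_0\le\eta$ for some $\eta=\eta(c,p)>0$. The three smallness ingredients in the hypothesis bound $U_0$ by a constant multiple of $\epsilon(p)$, so after fixing $c>0$ arbitrarily and choosing $\epsilon(p)$ small enough the iteration closes, forcing $|u|\le c$ a.e.\ on $Q_{1/2}$.

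The main technical obstacle is the pressure term. Only $\|p\|_{L^p_tL^1_x(B_1)}$ is controlled, with no a priori spatial integrability beyond $L^1$, so the pressure cannot be treated as an ordinary source; the harmonic plus Calder\'on--Zygmund splitting is unavoidable and is where the assumption $p>1$ must buy a strictly super-linear power of $U_k$ large enough to beat the geometric factor $4^k$ arising from the parabolic rescaling between the cylinders $Q_{r_k}$ and $Q_{r_{k+1}}$. Getting this bookkeeping of powers to line up is the delicate step.
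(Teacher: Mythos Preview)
The paper does not give its own proof of this statement---Theorem~\ref{VassThm} is quoted as a known result of Vasseur \cite{Vass}, and your sketch is a faithful outline of Vasseur's original De~Giorgi iteration. What the paper itself proves is the sharper endpoint $p=1$ (Theorem~\ref{VassThmp=1}), and there the method is entirely different: an inductive argument on dyadic scales $r_n=\theta^n$ in the spirit of \cite{CKN} and \cite{WZ}, testing the energy inequality against backward-heat-kernel cutoffs $\phi_m$ and reducing matters, via Lemma~\ref{SS-02}, to a uniform bound on $A((0,0),r_n)+B((0,0),r_n)$. The pressure is handled not by level-set bookkeeping but by subtracting spatial means (harmless since $u$ is divergence-free) and invoking the decay estimate \eqref{pbar} of Lemma~\ref{Dbar}, whose extra factor $r/\rho$ makes the quantity $U(k)=r_k^{-3/2}\int\|p-[p]_{0,r_k}\|_{L^2(B_{r_k})}\,dt$ contract geometrically across scales even under mere $L^1_tL^1_x$ control of $p$.

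You yourself flag that the De~Giorgi recursion degenerates at $p=1$ because the H\"older-in-time step for the harmonic piece $p_2$ yields $\beta_2=0$; the paper's inductive scheme sidesteps this by trading the super-linear gain in $U_k$ for a linear-but-contractive gain in the scale parameter. So the two approaches are genuinely different in mechanism and in reach: yours reproduces the $p>1$ range constructively, while the paper's method is designed precisely to capture the endpoint your argument cannot.
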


It is not hard to see from the generalized energy inequality (see Definition \ref{SWS} below) and a simple covering argument that Theorem   \ref{VassThm} indeed implies Theorem   \ref{q=3}.

We now state another related $\epsilon$-regularity criterion that was obtained and used in \cite[Proposition 5.1]{WZ}.
\begin{theorem}[\cite{WZ}] \label{WZThm}
There exists an $\epsilon>0$ such that if $u$ is a suitable weak solution in $Q_1$ and satisfies 
\begin{eqnarray}\label{WZsmall}
 \sup_{t\in[-1,0]} \int_{B_1(0)} |u(x,t)|^2 dx &+& \int_{-1}^0\norm{u}_{L^4(B_1(0))}^2 ds+\\
 && +\, \int_{-1}^0\norm{p}_{L^2(B_1(0))} ds    \leq \epsilon, \nonumber 
\end{eqnarray}
then $u\in  L^\infty(Q_{1/2})$. 
\end{theorem}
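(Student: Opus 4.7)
The strategy is to convert the hypothesis \eqref{WZsmall} into the hypothesis of Vasseur's criterion (Theorem \ref{VassThm}) on a slightly smaller cylinder, and then invoke Theorem \ref{VassThm} (after parabolic rescaling) to conclude.

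\emph{Step 1: Bootstrapping via the local energy inequality.} I would begin with the generalized energy inequality, applied with a test function $\phi\in C_c^\infty(B_1\times(-1,1))$ satisfying $\phi\equiv 1$ on $Q_\sigma$ for some fixed $\sigma\in(1/2,1)$. This yields
$$
\sup_{t\in[-\sigma^2,0]}\int_{B_\sigma}|u|^2\,dx+\iint_{Q_\sigma}|\nabla u|^2\,dx\,ds
\;\leq\; C\iint_{Q_1}|u|^2 +\left|\iint_{Q_1}(|u|^2+2p)\,u\cdot\nabla\phi\,dx\,ds\right|.
$$
The linear term is controlled by $\sup_t\|u\|_{L^2(B_1)}^2\leq\epsilon$. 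For the cubic term, the interpolation $\|u\|_{L^3}\leq\|u\|_{L^2}^{1/3}\|u\|_{L^4}^{2/3}$ gives
$$
\int_{-1}^0\|u\|_{L^3(B_1)}^3\,ds\;\leq\;\sup_t\|u\|_{L^2}\int_{-1}^0\|u\|_{L^4}^2\,ds\;\leq\;\epsilon^{3/2},
$$
and the pressure-velocity term is estimated by Cauchy--Schwarz against the assumption on $\|p\|_{L^2}$:
$$
\int_{-1}^0\|p\|_{L^2(B_1)}\|u\|_{L^2(B_1)}\,ds\;\leq\;\sup_t\|u\|_{L^2}\cdot\int_{-1}^0\|p\|_{L^2}\,ds\;\leq\;\epsilon^{3/2}.
$$
Hence $\sup_t\|u(\cdot,t)\|_{L^2(B_\sigma)}^2+\|\nabla u\|_{L^2(Q_\sigma)}^2\leq C\epsilon$. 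Sobolev and interpolation then yield $\|u\|_{L^{10/3}(Q_\sigma)}^{10/3}\leq C\epsilon^{5/3}$.

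\emph{Step 2: Local pressure decomposition.} Next I would split the pressure on $B_\sigma$ as $p=p_1+p_2$, where, with $\eta$ a smooth cutoff equal to one on $B_\sigma$ and supported in $B_1$,
$$
p_1(x,t)=-\partial_i\partial_j(-\Delta)^{-1}\bigl(\eta\, u_iu_j\bigr)(x,t),
$$
and $p_2=p-p_1$ is harmonic in $B_\sigma$. The Calder\'on--Zygmund estimate gives $\|p_1(\cdot,t)\|_{L^q(\RR^3)}\leq C\|u(\cdot,t)\|_{L^{2q}(B_1)}^2$ for $1<q<\infty$; specializing to $q=5/3$ and using the $L^{10/3}$ bound from Step 1, we obtain $\|p_1\|_{L^{5/3}(Q_\sigma)}\leq C\epsilon$. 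Interior estimates for harmonic functions yield, for any $\sigma'<\sigma$,
$$
\|p_2(\cdot,t)\|_{L^\infty(B_{\sigma'})}\;\leq\;C\bigl(\|p(\cdot,t)\|_{L^2(B_\sigma)}+\|u(\cdot,t)\|_{L^4(B_1)}^2\bigr),
$$
which is in $L^1_t$ with norm $\leq C\epsilon$ by the hypothesis.

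\emph{Step 3: Producing an $L^q_tL^1_x$ smallness of $p$ with $q>1$.} The key obstacle is to upgrade the $L^1_tL^2_x$ control of $p$ to an $L^q_tL^1_x$ smallness for some $q>1$, as demanded by Theorem \ref{VassThm}. For $p_1$, the $L^{5/3}(Q_\sigma)$ bound already furnishes smallness of $\int\|p_1\|_{L^1(B_{\sigma'})}^{5/4}\,ds$. For $p_2$, I would interpolate the $L^1_t$-small $L^\infty_x$ bound above against the a priori $L^{3/2}(Q_1)$ pressure bound enjoyed by any suitable weak solution; this produces the required $L^{1+\delta}_tL^1_x$-smallness of $p_2$ on a smaller cylinder $Q_{\sigma'}$ for some $\delta>0$.

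\emph{Step 4: Conclusion.} With Steps 1--3, the rescaled solution on $Q_{\sigma'}$ satisfies the hypothesis of Theorem \ref{VassThm}. Applying that theorem and a covering argument then yields $u\in L^\infty(Q_{1/2})$.

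\emph{Main obstacle.} The sensitive step is Step 3: the given information on the pressure is only at the $L^1$ level in time, while Theorem \ref{VassThm} requires a time-integrability strictly above $1$. The pressure splitting is essential because it separates the rough but spatially localized part $p_1$ (which inherits high integrability from the improved regularity of $u$ in Step 1) from the harmonic part $p_2$ (which inherits its spatial smoothness from being harmonic, so any remaining deficit must be absorbed in time using an interpolation with the background suitable-weak-solution bounds).
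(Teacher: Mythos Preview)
Your Steps 1 and 2 are sound and parallel the paper's local energy bound (Proposition \ref{ABcontrol0} with $\sigma=0$) and pressure decomposition (Lemma \ref{Dbar}). The genuine gap is in Step 3, precisely where you flag the difficulty. Interpolating the small $L^1_t L^\infty_x$ bound on $p_2$ against the a priori $L^{3/2}(Q_1)$ bound gives, for $1<1+\delta<3/2$,
\[
\int_{-(\sigma')^2}^{0} \norm{p_2(\cdot,t)}_{L^1(B_{\sigma'})}^{\,1+\delta}\,dt \;\le\; C\,\epsilon^{\,a}\,\norm{p}_{L^{3/2}(Q_1)}^{\,b}
\]
with $a,b>0$. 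This tends to zero as $\epsilon\to 0$ for a \emph{fixed} solution, but the smallness threshold you extract then depends on $\norm{p}_{L^{3/2}(Q_1)}$, a quantity that is merely finite by Definition \ref{SWS} and carries no universal bound. The resulting $\epsilon$ is therefore solution-dependent, which is not what the theorem asserts. No further spatial iteration helps: the harmonic part $p_2$ inherits no time regularity beyond what the data provide, so the $L^1_t$ ceiling on $p_2$ cannot be lifted using only the hypothesis \eqref{WZsmall}.

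The paper avoids this obstruction by first proving Theorem \ref{VassThmp=1}, i.e.\ Vasseur's criterion with the endpoint exponent $1$ on the pressure. That proof runs an inductive scale-by-scale argument (in the spirit of \cite{CKN,WZ}) using the \emph{oscillation} pressure estimate \eqref{pbar} of Lemma \ref{Dbar} together with the Seregin--\v{S}ver\'ak criterion (Lemma \ref{SS-02}); the factor $r/\rho$ in \eqref{pbar} makes the mean-free pressure decay geometrically across dyadic scales, so no upgrade of time integrability is ever required. Once Theorem \ref{VassThmp=1} is available, your Step 1 already supplies $A+B\le C\epsilon$ on $Q_{\sigma}$, and the hypothesis gives $\int_{-1}^{0}\norm{p}_{L^1(B_1)}\,dt\le C\int_{-1}^{0}\norm{p}_{L^2(B_1)}\,dt\le C\epsilon$, so Theorem \ref{VassThmp=1} applies directly after rescaling. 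In the paper this is recorded as the case $\sigma=0$ of Theorem \ref{PhThm-sigma} (equivalently $\alpha=2$, $\beta=1$ in Theorem \ref{PhThm-alpha-beta}); in particular the first term in \eqref{WZsmall} is not even needed.
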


Finally, we mention yet another $\epsilon$-regularity result that was obtained by the second named author  in \cite[Proposition 3.2]{Phuc}.
\begin{theorem}[\cite{Phuc}] \label{PhThm}
There exists an $\epsilon>0$ such that if $u$ is a suitable weak solution in $Q_1$ and satisfies 
\begin{equation*}%\label{Phsmall}
 \int_{-1}^0\norm{u}_{L^\frac{12}{5}(B_1(0))}^4 ds +\, \int_{-1}^0\norm{p}_{L^\frac{6}{5}(B_1(0))}^{2} ds    \leq \epsilon, 
\end{equation*}
then $u\in  L^\infty(Q_{1/2})$. 
\end{theorem}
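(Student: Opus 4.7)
The plan is to reduce Theorem \ref{PhThm} to Theorem \ref{VassThm} (with $p=2$) by deriving, from the hypothesis, the smallness of the local energy and of $\int \|p\|_{L^1}^2$ on a smaller cylinder. The key observation is the interpolation identity $\frac{1}{3}=\frac{2/3}{12/5}+\frac{1/3}{6}$, which gives $\|u\|_{L^3(B)} \le \|u\|_{L^{12/5}(B)}^{2/3}\|u\|_{L^6(B)}^{1/3}$ and hence $\|u\|_{L^3}^3 \le \|u\|_{L^{12/5}}^2\|u\|_{L^6}$. Combined with the Sobolev embedding $\|u\|_{L^6(B)}\le C\|u\|_{H^1(B)}$ and Cauchy--Schwarz in time, this exposes the interaction between the hypothesized quantity $\int\|u\|_{L^{12/5}}^4\,dt$ and the local energy. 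Analogously, H\"older's inequality gives $\|p\,u\|_{L^1}\le \|p\|_{L^{6/5}}\|u\|_{L^6}$, which handles the pressure--transport term in the energy inequality.

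Set, for $\tfrac{1}{2}\le\rho\le 1$,
$$E(\rho):=\esssup_{t\in(-\rho^2,0)}\int_{B_\rho}|u|^2\,dx+\iint_{Q_\rho}|\nabla u|^2\,dx\,dt,$$
and let $\mathcal N$ denote the left-hand side of the hypothesis, so $\mathcal N\le\epsilon$. I first apply the generalized energy inequality with a cutoff $\phi$ equal to $1$ on $Q_{r_1}$ and supported in $Q_{r_2}$, for $\tfrac{1}{2}\le r_1<r_2\le 1$, with $|\nabla\phi|\le C/(r_2-r_1)$ and $|\partial_t\phi|+|\Delta\phi|\le C/(r_2-r_1)^2$. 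Standard manipulations yield
$$E(r_1)\le\frac{C}{(r_2-r_1)^2}\iint_{Q_{r_2}}|u|^2\,dx\,dt+\frac{C}{r_2-r_1}\iint_{Q_{r_2}}\bigl(|u|^3+|p\,u|\bigr)\,dx\,dt.$$
Since $L^{12/5}(B_1)\hookrightarrow L^2(B_1)$, the first right-hand term is at most $C\mathcal N^{1/2}$. By the interpolation above together with Cauchy--Schwarz in $t$ and Sobolev in $x$, the cubic and pressure terms are controlled by
$$\iint_{Q_{r_2}}\bigl(|u|^3+|p\,u|\bigr)\,dx\,dt\le C\,\mathcal N^{1/2}\bigl(E(r_2)+\mathcal N^{1/2}\bigr)^{1/2}.$$

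Young's inequality absorbs a fraction of $E(r_2)$, giving $E(r_1)\le \tfrac{1}{2}E(r_2)+C\mathcal N^{1/2}/(r_2-r_1)^{2}$ for $\mathcal N\le 1$, and the standard iteration lemma then yields $E(\tfrac{1}{2})\le C\mathcal N^{1/2}\le C\epsilon^{1/2}$. Since $L^{6/5}(B_1)\hookrightarrow L^1(B_{1/2})$, also $\int_{-1/4}^0\|p\|_{L^1(B_{1/2})}^2\,dt\le C\epsilon$. After the natural parabolic rescaling $v(x,t):=\tfrac{1}{2} u(x/2,t/4)$, $q(x,t):=\tfrac{1}{4}p(x/2,t/4)$ mapping $Q_{1/2}\to Q_1$, each of the three terms in the hypothesis of Theorem \ref{VassThm} with $p=2$ is bounded by a universal constant times $\epsilon^{1/2}$; choosing $\epsilon$ small enough, Theorem \ref{VassThm} gives $v\in L^\infty(Q_{1/2})$, hence $u\in L^\infty(Q_{1/4})$ at the origin. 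Running the same argument centered at each $(x_0,t_0)\in \overline{Q_{1/2}}$ (using cylinders of fixed size still contained in $Q_1$) then yields $u\in L^\infty(Q_{1/2})$. The main obstacle is the clean bookkeeping in the absorption/iteration step: since the exponents $12/5$ and $6/5$ are sub-Serrin, the smallness quantity is not scale-invariant under the natural parabolic scaling, so the argument must be closed at a fixed pair of scales in $[\tfrac{1}{2},1]$ rather than iterated to arbitrarily small scales.
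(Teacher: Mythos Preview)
Your proof is correct. The energy--iteration step is exactly right: the interpolation $\|u\|_{L^3}^3\le\|u\|_{L^{12/5}}^{2}\|u\|_{L^6}$ together with Sobolev, Cauchy--Schwarz in time, and Young's inequality produces $E(r_1)\le\tfrac12 E(r_2)+C\mathcal N^{1/2}/(r_2-r_1)^2$, and Lemma~\ref{Giusti-lem} closes it. The rescaling and appeal to Theorem~\ref{VassThm} with exponent $2$ are clean, and the covering step is fine since for every $z_0\in\overline{Q_{1/2}}$ one has $Q_{1/2}(z_0)\subset Q_1$.

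The route differs from the one taken in this paper. Here Theorem~\ref{PhThm} is recovered as the endpoint $\alpha=6/5$, $\beta=2$ of Theorem~\ref{PhThm-alpha-beta}, which in turn follows from Theorem~\ref{PhThm-sigma} (via Lemma~\ref{bound-sigma}); the proof of Theorem~\ref{PhThm-sigma} rests on the new local energy bound of Proposition~\ref{ABcontrol0} in the $L^{-\sigma,2}$ framework, together with the sharpened criterion Theorem~\ref{VassThmp=1}. Your argument is essentially Proposition~\ref{ABcontrol1b} in the special case $\alpha=6/5$, proved directly with Lebesgue--space interpolation rather than through the negative Sobolev detour, and then finished with the already available Theorem~\ref{VassThm} instead of the harder Theorem~\ref{VassThmp=1}. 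What the paper's approach buys is uniformity across the full range $\alpha\in[6/5,2]$ and, more importantly, the oscillation--sensitive criterion in $L^{-\sigma,2}$ for the head pressure; what your approach buys is a short, self--contained argument for this particular endpoint that needs nothing beyond Vasseur's result.
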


The goal of this paper is to sharpen and unify the results obtained in Theorems \ref{q=3}-\ref{PhThm}. Our first result says 
that in fact one can take $p=1$ in Theorem \ref{VassThm}, i.e., we prove
\begin{theorem} \label{VassThmp=1}
There exists an $\epsilon>0$ such that if $u$ is a suitable weak solution in $Q_1$ and satisfies 
\begin{equation*}
 \sup_{t\in[-1,0]} \int_{B_1(0)} |u(x,t)|^2 dx + \int_{Q_1} |\nabla u|^2 dy ds +\int_{-1}^0 \norm{p}_{L^1(B_1(0))} ds   \leq \epsilon,
\end{equation*}
then $u\in  L^\infty(Q_{1/2})$. 
\end{theorem}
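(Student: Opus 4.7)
The plan is to prove Theorem~\ref{VassThmp=1} through a De Giorgi iteration in the spirit of Vasseur~\cite{Vass}, applied to truncations $v_k=(\frac{1}{2}|u|^2-C_k)_+$ of the kinetic energy density. The essential novelty (flagged in the abstract) is to control the head pressure $\Pi=p+\frac{1}{2}|u|^2$ not in a Lebesgue space but in a parabolic negative-order fractional Sobolev space of the form $L^{q}_t W^{-\alpha,r}_x$ for suitable $\alpha\in(0,1)$. Such a norm is compatible with merely $L^1_tL^1_x$ control of $p$, provided one first extracts the harmonic component of the pressure.

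For the pressure estimate, I would fix a spatial cutoff $\chi$ equal to $1$ on a ball slightly smaller than $B_1(0)$ and write $p=p_1+p_2$, with $p_1=-R_iR_j(u_iu_j\chi)$ arising from the Calder\'on-Zygmund representation and $p_2=p-p_1$ harmonic where $\chi\equiv 1$. Then $p_1$ enjoys Lebesgue bounds controlled by $\|u\|_{L^{10/3}_{x,t}}^{2}$, which is small by the energy hypothesis, while the harmonic part satisfies interior estimates $\|p_2\|_{L^q(B_r)}\lesssim_{r,q}\|p_2\|_{L^1(B_1)}\lesssim \|p\|_{L^1(B_1)}+\|p_1\|_{L^1(B_1)}$ for any $q<\infty$. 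Adding $\frac{1}{2}|u|^2$ (controlled by the assumed energy), and embedding $L^q\hookrightarrow W^{-\alpha,r}$ for appropriate exponents, yields a smallness bound on $\Pi$ in an $L^{q}_tW^{-\alpha,r}_x$ norm on a slightly smaller cylinder.

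The De Giorgi iteration then proceeds as in \cite{Vass}: test the generalized energy inequality against $v_k\phi^{2}$ with a smooth spatial cutoff $\phi$, producing the usual dissipation and level-set terms together with the pressure contribution $\int u\cdot\nabla(v_k\phi^{2})\,\Pi\,dxdt$. Using $\nabla\cdot u=0$ to move derivatives onto $\phi$ where possible and pairing the remaining factor against $\Pi$ through duality between $W^{-\alpha,r}_x$ and $W^{\alpha,r'}_x$, the pressure term becomes bounded by $\|\Pi\|_{W^{-\alpha,r}_x}$ times an $H^{\alpha}$-type norm of $u\,v_k\,\nabla\phi$ that is controlled by the natural energy and the measure of the super-level set. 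This closes a recursion of the form $A_{k+1}\le \tau^{k}A_k^{1+\delta}$ for a suitable level-set quantity $A_k$, giving $|u|^{2}\le M$ on $Q_{1/2}$ as soon as $A_0$ is small in $\epsilon$.

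The main obstacle is the quantitative pairing of the negative-Sobolev bound on $\Pi$ with the characteristic function of $\{|u|^{2}>C_k\}$ that is intrinsic to the De Giorgi scheme. The natural dual estimate demands $u\,v_k\,\nabla\phi$ in a positive fractional Sobolev space, but $v_k$ is only as regular as $u$ itself; one must interpolate between the $\dot H^{1}$-dissipation bound for $v_k$ and the $L^{2}$-control of $u$, at the cost of a small power of the level-set area, while preserving smallness in $\epsilon$. Balancing these exponents so that the pressure smallness survives the iteration is the delicate technical point.
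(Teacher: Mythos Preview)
Your strategy departs from the paper's and, as written, has a genuine gap. The paper does \emph{not} prove Theorem~\ref{VassThmp=1} by De Giorgi iteration, nor does it invoke negative-order Sobolev control of the head pressure for this particular result; that device is reserved for Theorems~\ref{ENB} and~\ref{PhThm-sigma}. Instead the paper runs a Caffarelli--Kohn--Nirenberg/Scheffer inductive scheme: one tests the generalized energy inequality against rescaled backward heat kernels $\phi_m$ concentrated at scale $\theta^m$ and proves by induction on $m$ that $A((0,0),\theta^m)+B((0,0),\theta^m)\le\epsilon^{1/2}$, after which regularity follows from Lemma~\ref{SS-02}. The pressure contribution is handled by subtracting the spatial mean $[p]_{0,r_k}$ on each dyadic shell (legitimate since ${\rm div}\,u=0$) and invoking the decay estimate~\eqref{pbar} of Lemma~\ref{Dbar}: the quantity $U(k)=r_k^{-3/2}\int_{-r_k^2}^{0}\|p-[p]_{0,r_k}\|_{L^2(B_{r_k})}\,dt$ obeys $U(k)\le (C\theta)\,U(k-1)+C\theta^{-3/2}A(k-1)^{1/4}B(k-1)^{3/4}$. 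Choosing $\theta$ small makes the first term contractive, so the iteration closes with only $L^1_tL^1_x$ control of $p$ at the initial scale.

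The obstruction in your plan is the \emph{time} integrability of the harmonic piece $p_2$. Your decomposition yields $p_2\in L^1_t L^q_x$ on interior balls for every $q<\infty$, but nothing better than $L^1$ in $t$; passing to $W^{-\alpha,r}_x$ in space does not touch the time variable. In the De Giorgi pairing, placing the dual factor $u\,v_k\,\nabla\phi$ in $W^{\alpha,r'}_x$ costs $\alpha$ spatial derivatives of $u$ or $v_k$, and by interpolation between $L^\infty_tL^2_x$ and $L^2_t\dot H^1_x$ such a quantity lies only in $L^{2/\alpha}_t$, not in $L^\infty_t$. H\"older in time then forces $p_2\in L^{2/(2-\alpha)}_t$, i.e.\ precisely the $L^q_t$, $q>1$, hypothesis of Theorem~\ref{VassThm} that you are trying to remove. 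Without that gain the recursion $A_{k+1}\le \tau^k A_k^{1+\delta}$ fails to close: the $p_2$ contribution is at best linear in $A_k^{1/2}$. The paper's argument sidesteps this entirely because the contraction comes from the spatial harmonic decay in~\eqref{pbar} (the factor $r/\rho$), not from H\"older in time.
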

This theorem implies that in the condition \eqref{CKNsmall} of Caffarelli, Kohn, and Nirenberg one can replace the power $\frac54$ in the pressure term by 1. Our next result reads as follows.
\begin{theorem} \label{PhThm-alpha-beta} Let $\alpha\in[6/5, 2]$ and  $\beta=\frac{4\alpha}{7\alpha -6}\in[1, 2]$.
There exists an $\epsilon>0$ such that if $u$ is a suitable weak solution in $Q_1$ and satisfies 
\begin{equation*}
 \int_{-1}^0\norm{u}_{L^{2\alpha}(B_1(0))}^{2\beta} ds +\, \int_{-1}^0\norm{p}_{L^\alpha(B_1(0))}^{\beta} ds  
  \leq \epsilon, 
\end{equation*}
then $u\in  L^\infty(Q_{1/2})$. 
\end{theorem}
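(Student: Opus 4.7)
The plan is to reduce to Theorem~\ref{VassThmp=1} by bounding its three hypothesis quantities in terms of $\epsilon$ on a suitable subcylinder of $Q_1$. Write
\[
E_r := \sup_{t\in[-r^2,0]}\int_{B_r}|u|^2\,dx + \int_{-r^2}^{0}\!\!\int_{B_r}|\nabla u|^2\,dx\,ds.
\]
The pressure $L^1$ bound is immediate: H\"older in space and time yields $\int \|p\|_{L^1(B_r)}\,ds \lesssim \bigl(\int \|p\|_{L^\alpha(B_1)}^\beta\,ds\bigr)^{1/\beta}\le \epsilon^{1/\beta}$ (using $\alpha\ge 1$ and $\beta\ge 1$). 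The bulk of the argument is the energy bound $E_{1/2}\le C\epsilon^{1/\beta}$.

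For $1/2\le r_1<r_2\le 1$, I would choose a product cutoff $\phi = \phi_1(x)\phi_2(s)$ with $\phi\equiv 1$ on $Q_{r_1}$, $\supp\phi\subset Q_{r_2}$, $|\nabla\phi|\le C(r_2-r_1)^{-1}$, and $|\phi_s+\Delta\phi|\le C(r_2-r_1)^{-2}$. The generalized local energy inequality gives
\[
E_{r_1} \le \frac{C}{(r_2-r_1)^2}\iint_{Q_{r_2}}|u|^2 \;+\; \frac{C}{r_2-r_1}\iint_{Q_{r_2}}\bigl(|u|^3+|p|\,|u|\bigr),
\]
and the first term is $\le C\epsilon^{1/\beta}(r_2-r_1)^{-2}$ by the same H\"older as above. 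The heart of the proof is to bound each of the remaining integrals by $E_{r_2}^{1/2}\,\epsilon^{1/\beta}$.

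For the cubic term, the plan is the three-factor spatial H\"older
\[
\|u\|_{L^3(B)}^3 \le \|u\|_{L^2(B)}^a\,\|u\|_{L^{2\alpha}(B)}^b\,\|u\|_{L^6(B)}^c,\quad a+b+c = 3,\ \tfrac{a}{2}+\tfrac{b}{2\alpha}+\tfrac{c}{6} = 1,
\]
followed by H\"older in time placing exponent $2\beta$ on the $L^{2\alpha}$ factor and exponent $2$ on the $L^6$ factor. Matching time exponents forces $b = \beta(2-c)$, and then the identity $7\alpha\beta = 4\alpha + 6\beta$ (equivalent to $\beta = 4\alpha/(7\alpha-6)$) collapses this to $b = 2$, $c = 3(2\beta-\alpha)/(2\alpha+3\beta)\in[0,1]$, $a = 1-c\ge 0$; converting $\int\|u\|_{L^6}^2\,ds$ into $\lesssim E_{r_2}$ via the Sobolev embedding $H^1(B_r)\hookrightarrow L^6(B_r)$ yields $\iint|u|^3 \le E_{r_2}^{1/2}\epsilon^{1/\beta}$. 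For the pressure-velocity term, H\"older in space gives $\|p\|_{L^\alpha}\|u\|_{L^{\alpha'}}$, after which I interpolate $\|u\|_{L^{\alpha'}}\le \|u\|_{L^2}^\mu\|u\|_{L^6}^{1-\mu}$ with $\mu = (5\alpha-6)/(2\alpha)\in[0,1]$. H\"older in time with exponent $\beta$ on $\|p\|_{L^\alpha}$ requires $(1-\mu)\beta/(\beta-1) = 2$, which is \emph{exactly} the relation $\beta = 4\alpha/(7\alpha-6)$; this is the point at which the prescribed pair $(\alpha,\beta)$ tunes the interpolation precisely to the hypothesis, giving $\iint|p|\,|u|\le E_{r_2}^{1/2}\epsilon^{1/\beta}$.

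Substituting these bounds and applying Young's inequality to absorb $E_{r_2}^{1/2}$ yields
\[
E_{r_1} \;\le\; \tfrac12 E_{r_2} + \frac{C\,\epsilon^{1/\beta}}{(r_2-r_1)^{2}}\qquad\text{for all }\tfrac12\le r_1<r_2\le 1.
\]
A standard Giaquinta-type iteration lemma then gives $E_{1/2}\le C\epsilon^{1/\beta}$. Combined with the pressure bound, a rescaling of $Q_{1/2}$ to $Q_1$ produces sufficiently small data for Theorem~\ref{VassThmp=1}, which delivers $u\in L^\infty$ on a neighborhood of the origin; repeating the same argument at other base points in $Q_{1/2}$ and covering upgrades this to $u\in L^\infty(Q_{1/2})$. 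The main obstacle is identifying the two interpolations and the pleasant observation that $\beta = 4\alpha/(7\alpha-6)$ is exactly the value that makes both of them close simultaneously against the given hypothesis; equivalently, it is the unique relation along which the spatial Sobolev interpolation exponent $\mu$ for the pressure term coincides with the time H\"older compatibility $(2-\beta)/\beta$.
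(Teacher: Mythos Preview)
Your argument is correct and complete, but it takes a different route from the paper. The paper deduces Theorem~\ref{PhThm-alpha-beta} as a corollary of the stronger Theorem~\ref{PhThm-sigma} via the embedding of Lemma~\ref{bound-sigma}: choosing $\sigma=3(2-\alpha)/(2\alpha)\in[0,1]$ gives $\tfrac{6}{3+2\sigma}=\alpha$ and $\tfrac{2}{2-\sigma}=\beta$, so the $L^\alpha$ hypothesis controls the $L^{-\sigma,2}$ one. The underlying energy bound (Proposition~\ref{ABcontrol0}) is obtained by pairing the entire flux term $(|u|^2+2p)\,u\cdot\nabla\phi$ through the $L^{-\sigma,2}$--$L^{\sigma,2}$ duality and interpolating $\|u\cdot\nabla\phi\|_{\dot H^\sigma}$ via Lemma~\ref{GNIne}; Proposition~\ref{ABcontrol1b} then follows from Lemma~\ref{bound-sigma}. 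You instead prove Proposition~\ref{ABcontrol1b} directly, splitting the flux into $|u|^3$ and $|p|\,|u|$ and replacing the negative-order duality by the two classical Lebesgue interpolations you set up; the relation $\beta=4\alpha/(7\alpha-6)$ is exactly what makes both close simultaneously, which in the paper's language is the Lebesgue-space shadow of the single $\dot H^\sigma$ interpolation at $\sigma=3(2-\alpha)/(2\alpha)$. Both approaches then feed into Theorem~\ref{VassThmp=1} and use the same Giusti iteration (Lemma~\ref{Giusti-lem}). Your route is more elementary and self-contained for this statement, but it does not by itself yield Theorem~\ref{PhThm-sigma}, whose extra content is precisely that the $L^{-\sigma,2}$ norm of the pressure can detect oscillation and be small even when $\|p\|_{L^\alpha}$ is not.
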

The case $(\alpha,\beta)=(18/13, 3/2)$ gives a spatial improvement of Theorem \ref{q=3}, whereas the case $(\alpha,\beta)=(3/2, 4/3)$ gives a time improvement. Kukavica \cite[p. 2845]{Kuka} mentioned the issue whether the number 3 in \eqref{Linsmall} can be replaced by some $q<3$. Indeed, this is the case if we take $q=2\alpha=2\beta=20/7$. This gives both space and time improvement of Theorem  \ref{q=3}. Moreover, Theorems \ref{WZThm} and 
\ref{PhThm} are special end-point cases of Theorem \ref{PhThm-alpha-beta}, with $\alpha=2$ and $\alpha=6/5$, respectively . In fact, it also implies that the first term in 
condition \eqref{WZsmall} can be dropped.

Theorem \ref{PhThm-alpha-beta} is a consequence of the folllowing result.
\begin{theorem} \label{PhThm-sigma} Let $\sigma\in[0, 1]$. 
There exists an $\epsilon>0$ such that if $u$ is a suitable weak solution in $Q_1$ and satisfies 
\begin{equation*}
 \int_{-1}^0\norm{|u|^2}_{L^{-\sigma,2}(B_1(0))}^{\frac{2}{2-\sigma}} ds +\, \int_{-1}^0\norm{p}_{L^{-\sigma, 2}(B_1(0))}^{\frac{2}{2-\sigma}} ds  
  \leq \epsilon, 
\end{equation*}
then $u\in  L^\infty(Q_{1/2})$. 
\end{theorem}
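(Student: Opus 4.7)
\textbf{Proof plan for Theorem \ref{PhThm-sigma}.}
My plan is to reduce Theorem \ref{PhThm-sigma} to the already established Theorem \ref{VassThmp=1}: I would show that, for $\epsilon$ sufficiently small, the hypothesis of Theorem \ref{PhThm-sigma} forces the hypothesis of Theorem \ref{VassThmp=1} (with some power $\epsilon^\gamma$) to hold on a concentric sub-cylinder $Q_{r_0}\subset Q_1$. After rescaling $Q_{r_0}\mapsto Q_1$ and applying Theorem \ref{VassThmp=1}, one obtains $u\in L^\infty(Q_{r_0/2})$; a short covering argument then gives $u\in L^\infty(Q_{1/2})$.

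The heart of the reduction is a new local energy bound extracted from the generalized local energy inequality tested against a smooth space-time cutoff $\phi$ supported in $Q_1$ and equal to $1$ on $Q_{r_0}$. The only delicate term is
\[
 I \;=\; \iint_{Q_1} (|u|^2+2p)\, u\cdot\nabla\phi^2\, dx\, ds,
\]
and this is precisely where the negative-order space enters. Since $u$ is divergence-free, $\int_{B_1} u\cdot\nabla\phi^2\,dx=0$, so $I$ is invariant under replacing the \emph{head pressure} $|u|^2+2p$ by $|u|^2+2p-c(s)$ for any time-dependent constant $c(s)$; only the spatial oscillation of the head pressure contributes, and that is exactly what $\|\cdot\|_{L^{-\sigma,2}(B_1)}$ measures. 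I would therefore estimate $I$ by $L^{-\sigma,2}$--$H^{\sigma}_0$ duality in $x$ and H\"older in $s$ with the conjugate exponents $\tfrac{2}{2-\sigma}$ and $\tfrac{2}{\sigma}$, obtaining
\[
 |I| \;\lesssim\; \epsilon^{\tfrac{2-\sigma}{2}}\, \Bigl(\int_{-1}^0 \| u\,\nabla\phi^2\|_{H^\sigma(B_1)}^{2/\sigma}\, ds\Bigr)^{\!\sigma/2}.
\]
The second factor is handled by the spatial interpolation inequality $\|v\|_{H^\sigma}\le \|v\|_{L^2}^{1-\sigma}\|v\|_{H^1}^{\sigma}$ applied to $v=u\,\nabla\phi^2$, which after time integration is bounded by a subcritical power of the local energy $E := \|u\|_{L^\infty_t L^2_x(\mathrm{supp}\,\phi)}^2 + \|\nabla u\|_{L^2(\mathrm{supp}\,\phi)}^2$. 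Young's inequality then absorbs $I$ into the energy side of the inequality and leaves $E(Q_{r_0})\lesssim \epsilon^{\gamma}$ for some $\gamma>0$, modulo an inductive Caccioppoli step across nested cutoffs needed to bootstrap the a priori $L^\infty_t L^2_x$ factor on $\mathrm{supp}\,\phi$ (using Sobolev-Ladyzhenskaya $L^{10/3}(Q_1)$ control coming from the finite-energy condition built into the definition of a suitable weak solution).

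The main obstacle is then to convert the $L^{-\sigma,2}$-smallness of $p$ on $B_1$ into the $L^1$-smallness of $p$ on $B_{r_0}$ demanded by Theorem \ref{VassThmp=1}, since $L^1(B)\not\hookrightarrow L^{-\sigma,2}(B)$ for $\sigma\le 1$. I would use the standard pressure decomposition $p=p_{\mathrm{loc}}+p_{\mathrm{har}}$ on an intermediate ball $B_R$ with $r_0<R<1$, where $-\Delta p_{\mathrm{loc}}=\partial_i\partial_j(u_iu_j\,\chi_{B_R})$ is given by a Riesz transform of $u_iu_j\chi_{B_R}$ and $p_{\mathrm{har}}$ is harmonic in $B_R$. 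Calder\'on--Zygmund estimates give $\|p_{\mathrm{loc}}\|_{L^{3/2}(B_R)}\lesssim \|u\|_{L^3(B_R)}^2$, which, once integrated in $s$, is controlled by the already-established local energy bound via Sobolev embedding. For the harmonic piece, interior regularity yields $\|p_{\mathrm{har}}\|_{L^\infty(B_{r_0})}\lesssim \|p_{\mathrm{har}}\|_{L^{-\sigma,2}(B_R)} \le \|p\|_{L^{-\sigma,2}(B_R)}+\|p_{\mathrm{loc}}\|_{L^{-\sigma,2}(B_R)}$, both small (the first by hypothesis, the second by the local estimate just obtained). This verifies the hypothesis of Theorem \ref{VassThmp=1} on $Q_{r_0}$ and concludes the proof.
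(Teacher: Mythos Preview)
Your proposal is correct and follows essentially the same route as the paper: the local energy bound you extract from the generalized energy inequality by $L^{-\sigma,2}$--$H^\sigma$ duality on the head pressure, interpolation $\|v\|_{H^\sigma}\le \|v\|_{L^2}^{1-\sigma}\|v\|_{H^1}^{\sigma}$, and a Caccioppoli/absorption iteration is exactly Proposition~\ref{ABcontrol0} (Theorem~\ref{ENB}), and your pressure decomposition $p=p_{\mathrm{loc}}+p_{\mathrm{har}}$ with Calder\'on--Zygmund on $p_{\mathrm{loc}}$ and the harmonic estimate $\|p_{\mathrm{har}}\|_{L^\infty(B_{r_0})}\lesssim \|p_{\mathrm{har}}\|_{L^{-\sigma,2}(B_R)}$ is the content of Lemma~\ref{Dbar} (inequality~\eqref{DDtiti}) combined with Lemma~\ref{boundH}. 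The paper packages these two ingredients as Corollary~\ref{L1-sigma} and then invokes Theorem~\ref{VassThmp=1} and a covering argument, precisely as you outline; the only minor difference is that the paper proves the harmonic bound of Lemma~\ref{boundH} by an $L^2$--$L^{-\sigma,2}$--$L^{\sigma,2}$ interpolation plus Caccioppoli and Giusti iteration, whereas your ``interior regularity'' shortcut (testing $p_{\mathrm{har}}$ against a smooth radial bump via the mean value property) is in fact a slightly quicker way to the same inequality.
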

The space $L^{-\sigma,2}(B_1(0))$ is the dual of the space of functions $f$ in the homogeneous Sobolev space $\dH{\sigma}(\RR^3)$  such that ${\rm supp} f\subset \overline{B_{1}(0)}$. We have $L^{0,2}(B_1(0))= L^2(B_1(0))$. Interestingly, unlike the norm $\norm{p}_{L^\alpha(B_1(0))}$, for $\sigma\in(0,1]$ the norm $\norm{p}_{L^{-\sigma, 2}(B_1(0))}$ 
can `capture' the oscillation of $p$. Namely, it may happen that there exists $f\in L^{-\sigma, 2}(B_1(0))\cap L^1(B_1(0))$ but 
$|f|\not\in L^{-\sigma, 2}(B_1(0))$. In the case $\sigma =1$, one can take for example the function $f(x)=|x|^{-\epsilon-s} \sin(|x|^{-\epsilon})$  
with $s=2.4$ and $\epsilon=0.2$. See also the recent paper \cite{PhTo} for this kind of example in the context of $(BV)^*$, the dual of space of functions of bounded variation.
We mention that by Lemma \ref{bound-sigma} below, this theorem implies Theorem  \ref{PhThm-alpha-beta}.

The proof of Theorem \ref{PhThm-sigma} is based on Theorem \ref{VassThmp=1} and the following new local energy bounds for suitable weak solutions.
\begin{theorem}\label{ENB} Let $\sigma\in[0,1]$. There exists a constant $C>0$ such that for any suitable weak solution $(u,p)$ in $Q_1$ we have
\begin{eqnarray*}
\lefteqn{\sup_{- \frac{1}{4}\leq t\leq 0} 2\int_{B_{1/2}(0)} |u(x,t)|^2 dx + 2\int_{Q_{1/2}} |\nabla u(x,s)|^2 dxds}\\
&\leq& C \left( \int_{-1}^{0} \norm{|u^2|}_{L^{-\sigma,2}(B_1(0))}^{\frac{2}{2-\sigma}} ds\right)^{\frac{2-\sigma}{2}}+\\
&& +\, C \left( \int_{-1}^{0} \norm{|u^2| +2p}_{L^{-\sigma,2}(B_1(0))}^{\frac{2}{2-\sigma}} ds\right)^{2-\sigma}.
\end{eqnarray*}
\end{theorem}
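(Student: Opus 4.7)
The plan is to extract the bound directly from the generalized local energy inequality (LEI) for suitable weak solutions, exploiting the $L^{-\sigma, 2}(B_1)$--$\dot H^\sigma(\mathbb{R}^3)$ duality to treat the advective and pressure terms as distributions. For each pair $1/2 \leq r < R \leq 1$, pick a smooth cutoff $\phi(x,t) = \eta(x)^2\chi(t)$ with $\eta \equiv 1$ on $B_r$, $\mathrm{supp}\,\eta \subset B_R$, $|\nabla^k\eta| \lesssim (R-r)^{-k}$, and $\chi \equiv 1$ on $[-r^2,0]$, $\chi \equiv 0$ for $t \leq -R^2$, $|\chi'| \lesssim (R-r)^{-2}$. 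Inserting $\phi$ into the LEI and taking the supremum over $t \in [-r^2,0]$ produces
\begin{equation*}
\Phi(r) := \sup_{-r^2 \leq t \leq 0}\int_{B_r}|u(t)|^2 dx + \int_{Q_r}|\nabla u|^2 dxds \;\leq\; |\mathcal{I}| + |\mathcal{II}|,
\end{equation*}
where $\mathcal{I} = \int\!\!\int |u|^2(\phi_t + \Delta\phi)\,dxds$ and $\mathcal{II} = \int\!\!\int (|u|^2 + 2p)(u\cdot\nabla\phi)\,dxds$.

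Both terms are controlled by the duality pairing between $L^{-\sigma,2}(B_1)$ and the $\dot H^\sigma(\mathbb{R}^3)$ functions supported in $\overline{B_1}$. Since $\phi_t + \Delta\phi$ is smooth with $\|\phi_t+\Delta\phi\|_{L^2}\lesssim (R-r)^{-2}$ and $\|\nabla(\phi_t+\Delta\phi)\|_{L^2}\lesssim (R-r)^{-3}$, the interpolation $\|g\|_{\dot H^\sigma}\lesssim \|g\|_{L^2}^{1-\sigma}\|\nabla g\|_{L^2}^{\sigma}$ gives $\|\phi_t+\Delta\phi\|_{\dot H^\sigma}\lesssim (R-r)^{-2-\sigma}$, and H\"older in time with exponents $\tfrac{2}{2-\sigma}$ and $\tfrac{2}{\sigma}$ yields
\begin{equation*}
|\mathcal{I}| \;\lesssim\; (R-r)^{-2-\sigma}\left(\int_{-1}^{0}\||u|^2\|_{L^{-\sigma,2}(B_1)}^{\frac{2}{2-\sigma}}\,ds\right)^{\frac{2-\sigma}{2}}.
\end{equation*}
For $\mathcal{II}$, duality gives $|\mathcal{II}|\leq \int\||u|^2+2p\|_{L^{-\sigma,2}}\|u\cdot\nabla\phi\|_{\dot H^\sigma}\,ds$. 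Applying the same interpolation together with the Leibniz rule to $u\cdot\nabla\phi$ (which lies in $H^1(\mathbb{R}^3)$ by the $H^1_{\mathrm{loc}}$-regularity of $u$) I obtain
\begin{equation*}
\|u\cdot\nabla\phi\|_{\dot H^\sigma} \;\lesssim\; (R-r)^{-1} G(s)^{1-\sigma} H(s)^{\sigma} + (R-r)^{-1-\sigma} G(s),
\end{equation*}
with $G(s) := \|u(s)\|_{L^2(B_R)}$ and $H(s) := \|\nabla u(s)\|_{L^2(B_R)}$. A second H\"older in time with the same exponents, combined with the trivial bounds $\sup_s G(s) \leq \Phi(R)^{1/2}$ and $(\int H(s)^2 ds)^{1/2} \leq \Phi(R)^{1/2}$, produces
\begin{equation*}
|\mathcal{II}| \;\lesssim\; (R-r)^{-1-\sigma}\,\Phi(R)^{1/2}\left(\int_{-1}^{0}\||u|^2 + 2p\|_{L^{-\sigma,2}(B_1)}^{\frac{2}{2-\sigma}}\,ds\right)^{\frac{2-\sigma}{2}}.
\end{equation*}

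Young's inequality now absorbs the factor $\Phi(R)^{1/2}$ into $\tfrac{1}{2}\Phi(R)$ at the cost of squaring the remaining data factor; this squaring is precisely how the exponent $(2-\sigma)$ on the second term of Theorem~\ref{ENB} arises, versus $\tfrac{2-\sigma}{2}$ on the first. The conclusion is
\begin{equation*}
\Phi(r) \;\leq\; \tfrac{1}{2}\Phi(R) + C_\sigma(R-r)^{-2-2\sigma}\bigl[\text{RHS of Theorem \ref{ENB}}\bigr], \qquad \tfrac{1}{2}\leq r<R\leq 1.
\end{equation*}
A standard iteration lemma of Giaquinta--Giusti type (with free parameter $\theta = \tfrac{1}{2}$) applied on $[\tfrac{1}{2},1]$ then delivers $\Phi(\tfrac{1}{2}) \lesssim \text{RHS}$, which is Theorem~\ref{ENB} up to the harmless factor $2$.

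The main technical obstacle is the fractional product estimate for $\|u\cdot\nabla\phi\|_{\dot H^\sigma}$: because $u$ is only $L^2_t H^1_x$ (not smooth), one must interpolate via $\dot H^\sigma$ between $L^2$ and $\dot H^1$ and use the Leibniz rule to split the bound into exactly $G^{1-\sigma}H^{\sigma}$ plus a pure $G$ term. This specific split is what makes both subsequent time-H\"older factors exponentiate to the same $\Phi(R)^{1/2}$ on which Young's inequality can then act. A different weight would either kill the iteration (producing $\Phi(R)^\beta$ with $\beta > 1$) or produce a wrong power of the pressure norm in the theorem.
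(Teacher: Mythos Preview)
Your argument is correct and essentially identical to the paper's own proof (given as Proposition~\ref{ABcontrol0}): the same cutoff-in-the-LEI setup, the same $L^{-\sigma,2}$--$\dot H^\sigma$ duality, the same interpolation $\|g\|_{\dot H^\sigma}\le\|g\|_{L^2}^{1-\sigma}\|\nabla g\|_{L^2}^\sigma$ applied to $\phi_t+\Delta\phi$ and to $u\cdot\nabla\phi$, the same H\"older-in-time with exponents $\tfrac{2}{2-\sigma},\tfrac{2}{\sigma}$, Young's inequality to absorb $\Phi(R)^{1/2}$, and the Giusti iteration lemma to close. The only cosmetic differences are the form of the cutoff ($\eta^2\chi$ versus $\eta_1\eta_2$) and the cruder bound $|\chi'|\lesssim(R-r)^{-2}$ in place of the paper's $(R-r)^{-1}r^{-1}$, neither of which affects the argument.
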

For this result at every point and every scale we refer to  Proposition \ref{ABcontrol0} below. See also Proposition \ref{ABcontrol1b}.

\section{ Preliminaries} 

Throughout the paper we use the following notations for balls and parabolic cylinders:
$$B_r(x)=\{y\in \RR^3: |x-y|<r\}, \quad x\in\RR^3, \, r>0,$$
and
$$Q_r(z)=B_r(x)\times (t-r^2, t) \quad {\rm with~} z=(x,t).$$

The homogeneous Sobolev space $\dH{\sigma}(\RR^3),$ $\sigma\in \RR$, is the space of temper distributions $f$ for which  $\norm{f}_{\dH{\sigma}(\RR^3)}<+\infty$. Here we define
\begin{align*}
\norm{f}_{\dH{\sigma}(\RR^3)} =\left(\int_{\RR^3} |\xi|^{2\sigma}|\hat{f}(\xi)|^2 d\xi\right)^{\frac{1}{2}},   \qquad \sigma\in \RR.
\end{align*}
 The space $L^{\sigma,2}(B_r(x)):= %\overline{C_0^{\infty}(B_\rho(x_0))}^{\dH{\sigma}}$ 
 \left\{ f \in \dH{\sigma}(\RR^3) : \supp f\subset \overline{B_r(x)}\right\},$
 and its corresponding the dual space %$\left(L^{\sigma,2}(B_r(x))\right)^*$ 
 is denoted by $L^{-\sigma,2}(B_r(x))$.

The following scaling invariant  quantities will be employed:

$$A(z_0, r)=A(u, z_0, r)= \sup_{t_0-r^2\leq t\leq t_0} r^{-1}\int_{B_r(x_0)} |u(x,t)|^2 dx,$$
$$B(z_0, r)=B(u, z_0, r)= r^{-1}\int_{Q_r(z_0)} |\nabla u(x,t)|^2 dxdt,$$
%C(z_0, r)=C(u, z_0, r)&=& r^{-2}\int_{t_0-r^2}^{t_0} \norm{u}_{L^{3}(B_r(x_0))}^3 dt,\\
$$C_{\sigma}(z_0, r)=C_{\sigma}(u, z_0, r)=r^{-\frac{3}{2-\sigma}} \int_{t_0-r^2}^{t_0}\norm{|u|^2}^{\frac{2}{2-\sigma}}_{L^{-\sigma,2}(B_r(x_0))} dt,$$
$$C_{\alpha,\beta}(z_0, r)=C_{\alpha,\beta}(u, z_0, r)=r^{-\frac{3\beta}{2}}\int_{t_0-r^2}^{t_0} \norm{u}_{L^{2\alpha}(B_r(x_0))}^{2\beta} dt,$$
%D_1(z_0, r)=D_1(p, z_0, r)&=& r^{-3}\int_{t_0-r^2}^{t_0} \norm{p}_{L^{1}(B_r(x_0))} dt,\\
$$D_{\sigma}(z_0, r)=D_{\sigma}(u, z_0, r)=r^{-\frac{3}{2-\sigma}} \int_{t_0-r^2}^{t_0}\norm{p}^{\frac{2}{2-\sigma}}_{L^{-\sigma,2}(B_r(x_0))} dt,$$
%\ts{D}_{1\sigma}(z_0, r)=\ts{D}_{1\sigma}(u, z_0, r)&=&r^{-\frac3 2 -\sigma} \int_{t_0-r^2}^{t_0}\norm{p}_{L^{-\sigma,2}(B_r(x_0))} dt\\
$$D_{\alpha,\beta}(z_0, r)=D_{\alpha,\beta}(p,z_0, r)= r^{-\frac{3\beta}{2}}\int_{t_0-r^2}^{t_0} \norm{p}_{L^{\alpha}(B_r(x_0))}^{\beta} dt.$$

We now recall the  the notion of suitable  weak solutions that was first introduced in Caffarelli-Kohn-Nirenberg \cite{CKN}.  Here we use the version of F.-H. Lin \cite{Lin} that imposes the 3/2 space-time integrability condition on the pressure. 

\begin{definition}\label{SWS} Let $\om$ be an open set in $\RR^3$ and let $-\infty<a< b< \infty$. We say that a pair $(u, p)$ is a suitable weak solution 
to the Navier-Stokes equations in $Q=\om\times (a, b)$ if the following conditions hold:

{\rm (i)}   $u\in L^\infty(a, b; L^2(\om))\cap L^2(a, b; W^{1,\, 2}(\om)) {\rm ~and~} p\in L^{3/2}(\om\times (a, b));$

{\rm (ii)} $(u,p)$  satisfies the Navier-Stokes equations in the sense of distributions. That is, 

$$\int_{a}^{b}\int_{\om} \left\{-u \, \psi_{t} + \nabla u : \nabla \psi - (u\otimes u):\nabla \psi -  p \, {\rm div}\, \psi \right\} dxdt =0$$ 
for all vector fields $\psi\in C_0^{\infty}(\om\times(a,b); \RR^3)$, and
$$\int_{\om\times \{t\}} u(x,t)\cdot \nabla \phi(x) \, dx=0$$
for a.e. $t\in (a, b)$ and  all real valued functions $\phi\in C_0^{\infty}(\om)$;

{\rm (iii)} $(u,p)$  satisfies the local generalized energy inequality
\begin{eqnarray*}
\lefteqn{ \int_{\om}|u(x, t)|^2 \phi(x,t) dx + 2\int_{a}^t\int_{\om} |\nabla u|^2 \phi(x, s) dxds} \\
&\leq& \int_{a}^t\int_{\om} |u|^2 (\phi_t +\Delta \phi) dx ds + \int_{a}^t\int_{\om}(|u|^2 + 2p)u\cdot \nabla \phi dx ds
\end{eqnarray*}
for a.e. $t\in (a, b)$ and any nonnegative function $\phi \in C_0^{\infty}(\RR^3\times\RR)$ vanishing in a neighborhood of the parabolic boundary 
$\partial'Q=\om\times\{t=a\} \cup \partial\om\times [a, b]$.
\end{definition}

We next state several lemmas that are needed in this paper.

\begin{lemma}\label{GNIne} Given $f\in \dH{s_{_0}} \cap \dH{s_{_1}},$ $s_0,s_1 \in \RR$ and $0<\theta<1$, the following Gagliardo-Nirenberg  type inequality  holds 
$$\norm{f}_{\dH{s}}\leq \norm{f}_{\dH{s_{_0}}}^{1-\theta}\norm{f}_{\dH{s_{_1}}}^{\theta}$$
with  $s=(1-\theta)s_0+\theta s_1.$
\end{lemma}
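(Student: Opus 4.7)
The plan is to prove this purely on the Fourier side, since the homogeneous Sobolev norm is defined through the Plancherel representation $\norm{f}_{\dH{s}}^2 = \int_{\RR^3} |\xi|^{2s}|\hat{f}(\xi)|^2 d\xi$. Because $s=(1-\theta)s_0 + \theta s_1$, I would first split the weight as
$$|\xi|^{2s} |\hat f(\xi)|^2 = \bigl(|\xi|^{2 s_0}|\hat f(\xi)|^2\bigr)^{1-\theta} \bigl(|\xi|^{2 s_1}|\hat f(\xi)|^2\bigr)^{\theta}.$$

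Next I would integrate and apply Hölder's inequality on $\RR^3$ with conjugate exponents $\frac{1}{1-\theta}$ and $\frac{1}{\theta}$ (which are admissible since $0<\theta<1$) to obtain
$$\int_{\RR^3} |\xi|^{2s}|\hat f(\xi)|^2 d\xi \leq \left(\int_{\RR^3} |\xi|^{2 s_0}|\hat f(\xi)|^2 d\xi\right)^{1-\theta} \left(\int_{\RR^3} |\xi|^{2 s_1}|\hat f(\xi)|^2 d\xi\right)^{\theta}.$$
In terms of Sobolev norms this is $\norm{f}_{\dH{s}}^{2} \leq \norm{f}_{\dH{s_{_0}}}^{2(1-\theta)} \norm{f}_{\dH{s_{_1}}}^{2\theta}$, and taking square roots finishes the proof. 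The right-hand sides are finite by assumption, so the integrals are well-defined, and there is essentially no obstacle beyond bookkeeping; the only point worth double-checking is that the hypothesis $f\in \dH{s_{_0}}\cap \dH{s_{_1}}$ guarantees $\hat f$ is locally integrable away from the origin with respect to both weights $|\xi|^{2s_0}$ and $|\xi|^{2s_1}$, so that the intermediate norm $\norm{f}_{\dH{s}}$ is automatically controlled (and in particular finite).
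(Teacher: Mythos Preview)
Your argument is correct and is exactly the approach the paper has in mind: the authors simply remark that the lemma ``follows from H\"older's inequality,'' which is precisely the splitting $|\xi|^{2s}|\hat f|^2=(|\xi|^{2s_0}|\hat f|^2)^{1-\theta}(|\xi|^{2s_1}|\hat f|^2)^{\theta}$ together with H\"older with exponents $\tfrac{1}{1-\theta}$ and $\tfrac{1}{\theta}$ that you wrote out.
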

The proof of this lemma simply follows from H\"older's inequality.

\begin{lemma}\label{bound-sigma} For any ball $B_r(x)\subset\RR^3$ and any number $\sigma\in [0, \frac 3 2)$ one has that  
$L^{\frac{6}{3+2\sigma}}(B_r(x))\subset L^{-\sigma,2}(B_r(x))$ and
$$\norm{f}_{L^{-\sigma,2}(B_r(x))}\leq C \norm{f}_{L^{\frac{6}{3+2\sigma}}(B_r(x))}.$$
\end{lemma}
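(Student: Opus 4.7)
\textbf{Proof plan for Lemma \ref{bound-sigma}.} The plan is to argue by duality. By definition, for $f\in L^{\frac{6}{3+2\sigma}}(B_r(x))$ (extended by zero outside $B_r(x)$) the functional $\phi \mapsto \int_{B_r(x)} f\phi\, dy$ acts on $\phi \in L^{\sigma,2}(B_r(x))$, and
$$\norm{f}_{L^{-\sigma,2}(B_r(x))} = \sup\left\{ \Big|\int_{B_r(x)} f\phi\, dy\Big| : \phi \in L^{\sigma,2}(B_r(x)),\ \norm{\phi}_{\dH{\sigma}(\RR^3)}\leq 1\right\}.$$
So it suffices to bound the above pairing by $\norm{f}_{L^{\frac{6}{3+2\sigma}}(B_r(x))}$ uniformly in such $\phi$.

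The first step is to apply H\"older's inequality with the conjugate exponents $\frac{6}{3+2\sigma}$ and $\frac{6}{3-2\sigma}$ (these are conjugate since $\frac{3+2\sigma}{6}+\frac{3-2\sigma}{6}=1$, and both exponents are finite precisely because $\sigma\in[0,3/2)$):
$$\Big|\int_{B_r(x)} f\phi\, dy\Big| \leq \norm{f}_{L^{\frac{6}{3+2\sigma}}(B_r(x))}\, \norm{\phi}_{L^{\frac{6}{3-2\sigma}}(B_r(x))}.$$
Since $\supp\phi \subset \overline{B_r(x)}$, the last factor equals $\norm{\phi}_{L^{\frac{6}{3-2\sigma}}(\RR^3)}$.

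The second step is to invoke the Sobolev embedding $\dH{\sigma}(\RR^3)\hookrightarrow L^{\frac{6}{3-2\sigma}}(\RR^3)$, valid for every $\sigma\in[0,3/2)$ (in the case $\sigma=0$ this is just the identity $\dH{0}=L^2$). This yields
$$\norm{\phi}_{L^{\frac{6}{3-2\sigma}}(\RR^3)} \leq C\, \norm{\phi}_{\dH{\sigma}(\RR^3)} \leq C$$
for any $\phi$ in the unit ball of $L^{\sigma,2}(B_r(x))$. Combining the two inequalities and taking the supremum over all admissible $\phi$ produces the claimed bound.

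There is no serious obstacle: the argument is a standard duality/Sobolev embedding chain. The only points worth checking carefully are the exponent arithmetic (to confirm the two H\"older exponents really are dual and match the Sobolev target exponent $\frac{6}{3-2\sigma}$), and the harmless fact that the compact-support assumption on $\phi$ lets one freely switch between integration on $B_r(x)$ and on $\RR^3$ when invoking Sobolev. The restriction $\sigma<3/2$ is exactly what is needed to keep $\frac{6}{3-2\sigma}$ finite and the Sobolev embedding valid.
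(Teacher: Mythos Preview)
Your proof is correct and follows essentially the same approach as the paper: the paper also expresses the $L^{-\sigma,2}$ norm by duality, applies H\"older's inequality with the conjugate pair $\frac{6}{3+2\sigma}$ and $\frac{6}{3-2\sigma}$, and then invokes the Sobolev embedding $\dH{\sigma}(\RR^3)\hookrightarrow L^{\frac{6}{3-2\sigma}}(\RR^3)$.
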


\begin{proof} Observe that
$$\norm{f}_{L^{-\sigma,2}(B_r(x))}=\sup_{\varphi}\int_{B_r(x)} f(y)\varphi(y)dy,$$
where the $\sup$ is taken over  $\varphi\in L^{\sigma, 2}(B_r(x))$ such that $\norm{\varphi}_{\dH{\sigma}(\RR^3)}\leq1$. Thus by H\"older and Sobolev's inequalities we find
\begin{align*}
&\norm{f}_{L^{-\sigma,2}(B_r(x))}\leq\\
&\qquad\leq\sup_{\varphi}\left(\int_{B_r(x)} |f(x)|^{\frac{6}{3+2\sigma}}dx\right)^{\frac{3+2\sigma}6} \left(\int_{B_r(x)} |\varphi(y)|^{\frac{6}{3-2\sigma}}dy\right)^{\frac{3-2\sigma}6} \\
&\qquad\leq C \norm{f}_{L^{\frac{6}{3+2\sigma}}(B_r(x))} \sup_{\varphi} \norm{\varphi}_{\dH{\sigma}(\RR^3)} \leq  C \norm{f}_{L^{\frac{6}{3+2\sigma}}(B_r(x))}. 
\end{align*}
\end{proof}

A proof of the following lemma can be found in \cite[Lemma 6.1]{Giu}.

\begin{lemma}\label{Giusti-lem}
Let $I(s)$ be a bounded nonnegative function in the interval $[R_1, R_2]$. Assume that for every $s, \rho\in [R_1, R_2]$ and  $s<\rho$ we have 
$$I(s)\leq [A(\rho-s)^{-\alpha} +B(\rho-s)^{-\beta} +C] +\theta I(\rho)$$
with  $A, B, C\geq 0$, $\alpha>\beta>0$ and $\theta\in [0,1)$. Then there holds
$$I(R_1)\leq c(\alpha, \theta) [A(R_2-R_1)^{-\alpha} +B(R_2-R_1)^{-\beta} +C].$$

\end{lemma}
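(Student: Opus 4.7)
The plan is a standard hole-filling iteration along a geometric sequence of scales. Fix an auxiliary parameter $\tau\in(\theta^{1/\alpha},1)$; this choice will be justified in a moment. Define the increasing sequence
\begin{equation*}
s_0=R_1, \qquad s_{i+1}=s_i+(1-\tau)\tau^i(R_2-R_1), \quad i\geq 0,
\end{equation*}
so that $s_i\nearrow R_2$ and $s_{i+1}-s_i=(1-\tau)\tau^i(R_2-R_1)$. Since $s_i<s_{i+1}$ lie in $[R_1,R_2]$, the hypothesis applies at each stage and gives
\begin{equation*}
I(s_i)\leq A(s_{i+1}-s_i)^{-\alpha}+B(s_{i+1}-s_i)^{-\beta}+C+\theta\, I(s_{i+1}).
\end{equation*}

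Iterating this bound $N$ times starting from $i=0$ produces
\begin{equation*}
I(R_1)\leq \sum_{i=0}^{N-1}\theta^i\bigl[A(s_{i+1}-s_i)^{-\alpha}+B(s_{i+1}-s_i)^{-\beta}+C\bigr]+\theta^N I(s_N).
\end{equation*}
Substituting the value of $s_{i+1}-s_i$, the $A$-sum equals $(1-\tau)^{-\alpha}(R_2-R_1)^{-\alpha}\sum_i(\theta\tau^{-\alpha})^i$, and similarly for the $B$-sum with ratio $\theta\tau^{-\beta}$. The choice $\tau>\theta^{1/\alpha}$ ensures $\theta\tau^{-\alpha}<1$, and since $\beta<\alpha$ we automatically have $\theta\tau^{-\beta}<\theta\tau^{-\alpha}<1$, so both geometric series converge. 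The $C$-sum is bounded by $C/(1-\theta)$, and the remainder $\theta^N I(s_N)$ tends to $0$ as $N\to\infty$ because $I$ is bounded on $[R_1,R_2]$ and $\theta\in[0,1)$. Passing to the limit delivers
\begin{equation*}
I(R_1)\leq \frac{(1-\tau)^{-\alpha}}{1-\theta\tau^{-\alpha}}A(R_2-R_1)^{-\alpha}+\frac{(1-\tau)^{-\beta}}{1-\theta\tau^{-\beta}}B(R_2-R_1)^{-\beta}+\frac{C}{1-\theta},
\end{equation*}
which is the asserted estimate with $c(\alpha,\theta)$ given by the maximum of the three prefactors (depending only on $\alpha,\theta$, after fixing $\tau=\tau(\alpha,\theta)$, e.g.\ $\tau=(1+\theta^{1/\alpha})/2$).

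There is no serious obstacle here; the only point that requires attention is making the auxiliary parameter $\tau$ play two roles simultaneously, namely to force $\theta\tau^{-\alpha}<1$ (ensuring convergence of the iteration) while keeping $(1-\tau)^{-\alpha}$ bounded. This is why $\tau$ must be chosen strictly between $\theta^{1/\alpha}$ and $1$; any such $\tau$ works, and the dependence of the final constant on $\beta$ is absorbed into $c(\alpha,\theta)$ because $\beta<\alpha$ makes the $\beta$-series more convergent than the $\alpha$-series. The boundedness hypothesis on $I$ is used only to kill the tail $\theta^N I(s_N)$ and plays no other role.
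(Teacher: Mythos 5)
Your proof is correct, and it is precisely the standard geometric-dyadic iteration argument of Giusti's Lemma 6.1, which is exactly where the paper points for the proof (no proof is given in the paper itself). All the key points are handled properly: the choice $\tau\in(\theta^{1/\alpha},1)$ makes both series converge, the observation that $\beta<\alpha$ lets the $\beta$-prefactor be absorbed into $c(\alpha,\theta)$, and the boundedness of $I$ kills the remainder term.
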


We shall also need the following Sobolev interpolation inequality (see, e.g., (1.2) of \cite{LS}).
\begin{lemma} \label{Sob-inte} Let $B_r\subset \RR^3$. For any function $u\in W^{1,2}(B_r)$ such that $\int_{B_r} u dx=0$ and any $q\in [2, 6]$, it holds that 
$$\int_{B_r} |u|^q dx \leq C(q) \left(\int_{B_r} |\nabla u|^2 dx\right)^{3q/4-3/2} \left(\int_{B_r} |u|^2 dx\right)^{-q/4+3/2}.$$ 
\end{lemma}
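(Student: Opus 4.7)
The plan is to prove Lemma \ref{Sob-inte} by combining the Poincar\'e--Sobolev embedding on the ball (which is where the mean-zero hypothesis is used) with standard H\"older interpolation in Lebesgue spaces. By the matching of dimensions on both sides, the inequality is scale-invariant, so I would first reduce to the case $r=1$ via the rescaling $v(y)=u(ry)$, noting that $v$ still has zero mean on $B_1$, and that the rescaling sends $\int_{B_1}|v|^q dy \to r^{-3}\int_{B_r}|u|^q dx$, $\int_{B_1}|\nabla v|^2 dy \to r^{-1}\int_{B_r}|\nabla u|^2 dx$, and $\int_{B_1}|v|^2 dy \to r^{-3}\int_{B_r}|u|^2 dx$; the exponents $3q/4-3/2$ and $-q/4+3/2$ are exactly those for which all powers of $r$ cancel, confirming that the unit-ball estimate yields the general one.

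At the unit-ball level, the plan has three steps. First, I invoke the Poincar\'e--Sobolev inequality on $B_1\subset\RR^3$ for mean-zero functions, namely
\begin{equation*}
\left(\int_{B_1}|u|^6 dx\right)^{1/6}\le C\left(\int_{B_1}|\nabla u|^2 dx\right)^{1/2},
\end{equation*}
which follows by combining the Poincar\'e inequality for functions with $\int_{B_1} u\, dx=0$ with the Sobolev embedding $W^{1,2}(B_1)\hookrightarrow L^6(B_1)$ valid in three dimensions. Second, I interpolate the $L^q$-norm between $L^2$ and $L^6$ by H\"older's inequality: writing $1/q=(1-\theta)/2+\theta/6$ gives $\theta=3(q-2)/(2q)\in[0,1]$ for $q\in[2,6]$, and hence
\begin{equation*}
\|u\|_{L^q(B_1)}\le \|u\|_{L^2(B_1)}^{1-\theta}\,\|u\|_{L^6(B_1)}^{\theta}.
\end{equation*}
Third, substituting the Poincar\'e--Sobolev bound and raising to the $q$-th power yields
\begin{equation*}
\int_{B_1}|u|^q dx\le C(q)\left(\int_{B_1}|\nabla u|^2 dx\right)^{q\theta/2}\left(\int_{B_1}|u|^2 dx\right)^{q(1-\theta)/2}.
\end{equation*}

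To finish, I just check the arithmetic of the exponents. With $\theta=3(q-2)/(2q)$ one computes $q\theta/2=3(q-2)/4=3q/4-3/2$ and $q(1-\theta)/2=q/2-(3q/4-3/2)=-q/4+3/2$, matching precisely the exponents in the statement. Scaling back from $B_1$ to $B_r$ as above completes the proof. I do not foresee a real obstacle: the only delicate points are that the mean-zero hypothesis is essential for the Poincar\'e--Sobolev step (otherwise one would only get $W^{1,2}\hookrightarrow L^6$ with an additive $L^2$ term, which would produce an extra summand rather than the clean product form needed here), and the careful bookkeeping of the interpolation exponents to recover exactly $3q/4-3/2$ and $-q/4+3/2$.
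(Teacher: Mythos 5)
Your proof is correct: the scaling reduction, the Poincar\'e--Sobolev inequality for mean-zero functions on $B_1$, the H\"older interpolation between $L^2$ and $L^6$, and the exponent arithmetic all check out. The paper does not prove this lemma but simply cites (1.2) of \cite{LS}, and your argument is precisely the standard derivation of that multiplicative Gagliardo--Nirenberg-type inequality, so there is nothing to compare beyond noting that you have supplied the expected proof.
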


Lemma \ref{Sob-inte} implies the following well-known result  (see, e.g., \cite[Lemma 5.1]{LS}).
\begin{lemma}\label{boundC1}
Let $u(x,t)$ be a function in $Q_{\rho}(z_0)$ for some $\rho>0$. Then for any $r\in (0, \rho]$ we have
$$r^{-2}\int_{Q_r(z_0)} |u|^3 dx dt\leq C \Big(\frac{\rho}{r} \Big)^3 A(z_0,\rho)^{3/4} B(z_0,\rho)^{3/4} + C \Big(\frac{r}{\rho} \Big)^3 A(z_0,\rho)^{3/2}.$$
\end{lemma}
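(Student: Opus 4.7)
The plan is to use the classical decomposition $u = v + w$, where $w(t) := |B_\rho(x_0)|^{-1}\int_{B_\rho(x_0)} u(y,t)\,dy$ is the spatial mean over $B_\rho(x_0)$ and $v := u - w$ has zero spatial mean on $B_\rho(x_0)$ at each time. Writing $z_0 = (x_0,t_0)$ and using $|u|^3 \leq 4(|v|^3 + |w|^3)$, I will split $\int_{Q_r(z_0)}|u|^3\,dx\,dt$ into a mean part and a mean-zero part. The mean part $w$ is constant in space, so its size is controlled directly by $A(z_0,\rho)$ and produces the $(r/\rho)^3 A^{3/2}$ term. The mean-zero part satisfies $\nabla v = \nabla u$ and admits a spatial Sobolev interpolation through Lemma \ref{Sob-inte}, which combined with a time H\"older inequality will produce the $A^{3/4} B^{3/4}$ term.

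For the mean piece, Cauchy--Schwarz gives
$$|w(t)| \leq |B_\rho(x_0)|^{-1/2}\,\|u(\cdot,t)\|_{L^2(B_\rho(x_0))} \leq C\rho^{-3/2}\bigl(\rho\,A(z_0,\rho)\bigr)^{1/2} = C\rho^{-1} A(z_0,\rho)^{1/2},$$
whence $\int_{Q_r(z_0)}|w|^3\,dx\,dt \leq C r^5 \rho^{-3} A(z_0,\rho)^{3/2}$, so dividing by $r^2$ yields the desired second term $C(r/\rho)^3 A(z_0,\rho)^{3/2}$.

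For the mean-zero piece I enlarge the spatial domain to $B_\rho(x_0)$ and apply Lemma \ref{Sob-inte} with $q=3$ at each time slice (noting that for $q=3$ both exponents $3q/4 - 3/2$ and $-q/4 + 3/2$ equal $3/4$). Using $\|v(\cdot,t)\|_{L^2(B_\rho(x_0))} \leq \|u(\cdot,t)\|_{L^2(B_\rho(x_0))} \leq (\rho A(z_0,\rho))^{1/2}$ and $\nabla v = \nabla u$, this gives
$$\int_{B_\rho(x_0)}|v(\cdot,t)|^3\,dx \leq C(\rho A(z_0,\rho))^{3/4}\Bigl(\int_{B_\rho(x_0)}|\nabla u(\cdot,t)|^2\,dx\Bigr)^{3/4}.$$
Integrating in $t$ over $(t_0-\rho^2, t_0)$ and applying H\"older's inequality in time with exponents $4/3$ and $4$ on an interval of length $\rho^2$ yields $\int_{Q_\rho(z_0)}|v|^3\,dx\,dt \leq C\rho^2 A(z_0,\rho)^{3/4} B(z_0,\rho)^{3/4}$; dividing by $r^2$ produces $C(\rho/r)^2 A^{3/4} B^{3/4}$, which is dominated by the claimed $C(\rho/r)^3 A^{3/4} B^{3/4}$ since $r \leq \rho$. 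No step here is genuinely delicate; the main obstacle is merely keeping the powers of $\rho$, $r$, $A$, and $B$ consistent throughout the Sobolev and H\"older steps.
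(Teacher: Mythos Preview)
Your proof is correct and follows the standard route that the paper implicitly invokes: the paper does not give its own proof but simply says ``Lemma~\ref{Sob-inte} implies the following well-known result'' and cites \cite[Lemma 5.1]{LS}, and your argument is precisely the mean/zero-mean decomposition combined with Lemma~\ref{Sob-inte} at $q=3$ and H\"older in time that underlies that reference. As you observed, your bound on the mean-zero part actually comes out with the sharper factor $(\rho/r)^2$ (and would be $(\rho/r)^{3/2}$ if you kept the time interval at length $r^2$ rather than enlarging to $\rho^2$), which is harmlessly absorbed into the stated $(\rho/r)^3$.
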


\section{Local energy estimates}

We prove Theorem \ref{ENB} in this section. We will do it at every point and every scale. The proof employs the idea of viewing the `head pressure' $\frac12 |u|^2+p$ as a signed distribution in $L^{-\sigma,2}$.
\begin{proposition}\label{ABcontrol0}  Suppose that $(u,p)$ is a suitable weak solution 
to the Navier-Stokes equations in  $Q_r(z_0)$. Then it holds that 
\begin{eqnarray*}
A(z_0, r/2) + B(z_0, r/2) &\leq& C\, C_{\sigma}(z_0, r)^{\frac{2-\sigma}{2}}+\\
&+&   C \left[ r^{\frac{-3}{2-\sigma}} \int_{t_0-r^2}^{t_0} \norm{|u|^2 + 2p}_{L^{-\sigma,2}(B_r(x_0))}^{\frac{2}{2-\sigma}} dt\right]^{{2-\sigma}}
\end{eqnarray*}
for any $\sigma\in [0,1]$.
\end{proposition}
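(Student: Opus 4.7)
My plan is to apply the generalized local energy inequality (Definition \ref{SWS}) with a product cutoff $\phi(x,t)=\eta(t)\psi(x)$ localized at two intermediate scales $r/2\le\rho<\tau\le r$, estimate each right-hand side term through the duality between $L^{-\sigma,2}(B_\tau(x_0))$ and $\dH{\sigma}(\RR^3)$, and close the estimate via Lemma \ref{Giusti-lem} applied to $I(s):=A(z_0,s)+B(z_0,s)$ on $[r/2,r]$. Concretely, I take $\psi\in C_0^\infty(B_\tau(x_0))$ with $\psi\equiv 1$ on $B_\rho(x_0)$ and $|\nabla^j\psi|\le C(\tau-\rho)^{-j}$, together with $\eta\in C^\infty(\RR)$ vanishing for $t\le t_0-\tau^2$, equal to $1$ on $[t_0-\rho^2,t_0]$, and satisfying $|\eta'|\le C/(\tau^2-\rho^2)$. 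Taking $\sup_{t\in[t_0-\rho^2,t_0]}$ of the energy inequality bounds its left-hand side below by $\rho A(z_0,\rho)+2\rho B(z_0,\rho)$, and its right-hand side splits as $I_1+I_2+I_3$ coming from the $|u|^2\phi_t$, $|u|^2\Delta\phi$, and $(|u|^2+2p)u\cdot\nabla\phi$ contributions.

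For $I_1,I_2$ I pair $|u(\cdot,t)|^2\in L^{-\sigma,2}(B_\tau(x_0))$ against the spatial multipliers $\psi$, $\Delta\psi$ (both in $L^{\sigma,2}(B_\tau(x_0))$), use H\"older in time with conjugate pair $(\tfrac{2}{2-\sigma},\tfrac{2}{\sigma})$, and employ the estimates $\norm{\psi}_{\dH{\sigma}}\lesssim\tau^{(3-\sigma)/2}(\tau-\rho)^{-\sigma/2}$ and $\norm{\Delta\psi}_{\dH{\sigma}}\lesssim\tau(\tau-\rho)^{-3/2-\sigma}$, both obtained by interpolating the elementary $L^2$ and $\dH{1}$ bounds via Lemma \ref{GNIne}. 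A bookkeeping of the resulting $\tau,\rho,r,\tau-\rho$ powers (using $\rho\ge r/2$, $\tau\le r$) bounds $\rho^{-1}(I_1+I_2)$ by a constant multiple of $(\tau-\rho)^{-\gamma}r^{\delta}\,C_\sigma(z_0,r)^{(2-\sigma)/2}$ for some admissible $\gamma,\delta>0$.

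The main obstacle is $I_3$, since the duality now forces the multiplier $u\cdot\nabla\psi$ itself to be measured in $\dH{\sigma}$, mixing derivatives of $u$. I would apply Lemma \ref{GNIne} with $s_0=0,s_1=1,\theta=\sigma$ to get
$$\norm{u\cdot\nabla\psi}_{\dH{\sigma}}\le\frac{C\,\norm{u}_{L^2(B_\tau)}^{1-\sigma}\,\norm{\nabla u}_{L^2(B_\tau)}^{\sigma}}{\tau-\rho}+\frac{C\,\norm{u}_{L^2(B_\tau)}}{(\tau-\rho)^{1+\sigma}},$$
pair this against $\norm{|u|^2+2p}_{L^{-\sigma,2}(B_r(x_0))}$, apply H\"older in time with the same conjugate pair, and use $\norm{u(\cdot,t)}_{L^2(B_\tau)}^2\le\tau A(z_0,\tau)$ together with $\int_{t_0-\tau^2}^{t_0}\norm{\nabla u(\cdot,t)}_{L^2(B_\tau)}^2 dt=\tau B(z_0,\tau)$ to bound $\rho^{-1}I_3$ by a constant times
\begin{equation*}
\frac{\tau^{1/2}}{\rho(\tau-\rho)}\left(\int_{t_0-r^2}^{t_0}\norm{|u|^2+2p}_{L^{-\sigma,2}(B_r(x_0))}^{\frac{2}{2-\sigma}}dt\right)^{(2-\sigma)/2}A(z_0,\tau)^{(1-\sigma)/2}B(z_0,\tau)^{\sigma/2}
\end{equation*}
plus a lower-order analogue. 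The weighted AM-GM $A^{(1-\sigma)/2}B^{\sigma/2}\le(A+B)^{1/2}$ followed by Young's inequality $XY^{1/2}\le\theta Y+C\theta^{-1}X^2$ peels off an absorbable $\theta(A(z_0,\tau)+B(z_0,\tau))$ and produces a remainder proportional to $\theta^{-1}r^{-1}(\tau-\rho)^{-2}[\int_{t_0-r^2}^{t_0}\norm{|u|^2+2p}_{L^{-\sigma,2}(B_r(x_0))}^{\frac{2}{2-\sigma}}dt]^{2-\sigma}$.

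Collecting the three estimates gives, for every $r/2\le\rho<\tau\le r$, an inequality of the form $I(\rho)\le\theta I(\tau)+\sum_k A_k(\tau-\rho)^{-\alpha_k}$ with $\theta\in[0,1)$ (choose Young's parameter small), $\alpha_k>0$, and the $A_k$ built from $C_\sigma(z_0,r)^{(2-\sigma)/2}$ and $[r^{-3/(2-\sigma)}\int_{t_0-r^2}^{t_0}\norm{|u|^2+2p}_{L^{-\sigma,2}(B_r(x_0))}^{\frac{2}{2-\sigma}}dt]^{2-\sigma}$ equipped with compatible $r$-powers. A direct application of Lemma \ref{Giusti-lem} with $R_1=r/2,R_2=r$ then trades the $(\tau-\rho)^{-\alpha_k}$ factors for $(r/2)^{-\alpha_k}$; by construction the $r$-powers align to reproduce exactly the bound claimed in Proposition \ref{ABcontrol0}.
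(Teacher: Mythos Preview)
Your proposal is correct and follows essentially the same route as the paper's own proof: a product cutoff at intermediate radii $r/2\le s<\rho\le r$, duality of $L^{-\sigma,2}$ against $L^{\sigma,2}$, Lemma \ref{GNIne} to interpolate the $\dH{\sigma}$ norms of the test-function factors (in particular $u\cdot\nabla\phi$), H\"older in time with the pair $(\tfrac{2}{2-\sigma},\tfrac{2}{\sigma})$, Young's inequality to peel off an absorbable $\theta I(\tau)$, and finally Lemma \ref{Giusti-lem}. The only differences are cosmetic: the paper groups $\phi_t+\Delta\phi$ into a single term $J_1$ rather than your separate $I_1,I_2$, and it runs the iteration on $I(s)=s\,A(z_0,s)+s\,B(z_0,s)$ rather than on $A(z_0,s)+B(z_0,s)$, which is immaterial since $s\in[r/2,r]$.
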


\begin{proof}
For $z_0=(x_0, t_0)$ and $r>0$, we consider the cylinders $$Q_s(z_0)=B_s(x_0)\times (t_0-s^2, t_0)\subset Q_\rho(z_0)=B_\rho(x_0)\times (t_0-\rho^2, t_0),$$
where $r/2\leq s<\rho\leq r$.

Let $\phi(x,t)=\eta_1(x)\eta_2(t)$ where $\eta_1\in  C_0^{\infty}(B_\rho(x_0))$, $0\leq \eta_1\leq 1$ in $\RR^n$, $\eta_1\equiv 1$ on $B_s(x_0)$, and $$|\nabla^{\alpha} \eta_1|\leq \frac{c}{(\rho-s)^{|\alpha|}}$$ for all multi-indices $\alpha$ with $|\alpha|\leq 3$. The function $\eta_2(t)$ is chosen so that 
$\eta_2\in C_0^{\infty}(t_0-\rho^2, t_0+\rho^2)$, $0\leq\eta_2\leq 1$ in $\RR$, $\eta_2(t)\equiv 1$ for $t\in [t_0-s^2, t_0+s^2]$, and $$|\eta'_2(t)|\leq \frac{c}{\rho^2-s^2}\leq \frac{c}{r(\rho-s)}.$$
Then it holds that 
$$|\phi_t|\leq \frac{c}{r(\rho-s)},  \qquad|\nabla \phi_t|\leq \frac{c}{r(\rho-s)^2},$$
$$ |\nabla^3 \phi|\leq  \frac{c}{(\rho-s)^3},\qquad |\nabla^2 \phi|\leq  \frac{c}{(\rho-s)^2},\qquad |\nabla  \phi|\leq  \frac{c}{\rho-s}.$$

We next define
$$I(s)=I_1(s) +I_2(s),$$
where
$$I_1(s)=\sup_{t_0-s^2\leq t\leq t_0}\int_{B_s(x_0)}|u(x, t)|^2 dx=s\, A(z_0,s)$$
and 
$$I_2(s)=\int_{t_0-s^2}^{t_0}\int_{B_s(x_0)}|\nabla u(x, t)|^2 dx dt=s\, B(z_0,s).$$

For $0\leq \sigma \leq 1$, using $\phi$ as a test function in the generalized energy inequality we find
\begin{eqnarray}\label{Isenergy}
\lefteqn{I(s)}\nonumber\\
&\leq& \int_{t_0-\rho^2}^{t_0} \norm{|u|^2}_{L^{-\sigma, \, 2}(B_\rho(x_0))}\norm{\phi_t + \Delta \phi}_{L^{\sigma, \, 2}(B_{\rho}(x_0))} dt +\\
&& + \int_{t_0-\rho^2}^{t_0} \Big\{\norm{|u|^2 + 2p}_{L^{-\sigma, \, 2}(B_\rho(x_0))}\norm{ u\cdot \nabla \phi }_{L^{\sigma, \, 2}(B_{\rho}(x_0))}\Big\} dt\nonumber \\
&=:& J_1+ J_2.\nonumber
\end{eqnarray}

Applying the Gagliardo-Nirenberg type inequality (Lemma \ref{GNIne}), properties of the test function $\phi$, and H\"older's inequality 
we have 
\begin{eqnarray*}%\label{firstterm}
J_1 &\leq& C \ds\int_{t_0-\rho^2}^{t_0} \norm{|u|^2}_{L^{-\sigma, \, 2}(B_\rho(x_0))}\times \nonumber\\
&&\quad\times\left\{\norm{\phi_t + \Delta \phi}_{L^{2}(B_{\rho}(x_0))}^{1-\sigma}\norm{\nabla\phi_t +\nabla \Delta \phi}_{L^{2}(B_{\rho}(x_0))}^{\sigma}\right\}dt\nonumber \\
&\leq&C \dfrac{\rho^{\frac {3}{2}}}{(\rho-s)^{2+\sigma}} \ds \int_{t_0-\rho^2}^{t_0} \norm{|u|^2}_{L^{-\sigma, \, 2}(B_\rho(x_0))} dt\\
&\leq&C \dfrac{\rho^{\frac {3}{2}+\sigma}}{(\rho-s)^{2+\sigma}}
\left( \ds\int_{t_0-\rho^2}^{t_0} \norm{|u|^2}_{L^{-\sigma,2}(B_\rho(x_0))}^{\frac{2}{2-\sigma}} dt\right)^{\frac{2-\sigma}{2}} .
 \end{eqnarray*}

Similarly,
\begin{eqnarray*}%\label{secondterm}
J_2 &\leq&C \left(
\ds\int_{t_0-\rho^2}^{t_0}\norm{|u|^2 + 2p}^{\frac{2}{2-\sigma}}_{L^{-\sigma,2}(B_\rho(x_0))}dt\right)^{\frac{2-\sigma}{2}}\times \\
&\times&\left(\ds\int_{t_0-\rho^2}^{t_0}\norm{ u\cdot \nabla \phi }^{\frac{2(1-\sigma)}{\sigma}}_{L^{2}(B_{\rho}(x_0))} 
 \norm{\nabla u\cdot \nabla \phi +u \cdot \nabla^2 \phi}^{2}_{L^{2}(B_{\rho}(x_0))} dt\right)^{\frac{\sigma}{2}}.
 \end{eqnarray*}

Let us set 
$$X= \int_{t_0-r^2}^{t_0} \norm{|u|^2}_{L^{-\sigma,2}(B_r(x_0))}^{\frac{2}{2-\sigma}} dt,$$  
 $$Y=\int_{t_0-r^2}^{t_0} \norm{|u|^2 + 2p}_{L^{-\sigma,2}(B_r(x_0))}^{\frac{2}{2-\sigma}} dt.$$

Then combining  \eqref{Isenergy} with the estimates for $J_1$ and $J_2$,  it follows that  
\begin{eqnarray*}
I(s)&\leq&  C \frac{\rho^{\frac {3}{2}+\sigma}}{(\rho-s)^{2+\sigma}
}X^\frac {2-\sigma}{2}+ Y^{\frac {2-\sigma}{2}}\sup_{t_0-\rho^2 \leq t\leq t_0}\norm{u\cdot \nabla \phi}_{L^{2}(B_{\rho}(x_0))}^{1-\sigma} \times\nonumber\\
&&\qquad\times \left(\int_{t_0-\rho^2}^{t_0}
 \norm{\nabla u\cdot \nabla \phi +u \cdot \nabla^2 \phi}^{2}_{L^{2}(B_{\rho}(x_0))} dt\right)^{ \frac{\sigma}{2}}\nonumber\\
&\leq&  C \frac{\rho^{\frac {3}{2}+\sigma}}{(\rho-s)^{2+\sigma}}X^\frac {2-\sigma}{2}
+ C\,Y^{\frac {2-\sigma}{2}} \left(\frac{I_1(\rho)}{(\rho-s)^2}\right)^{\frac{1-\sigma}{2}}\times\nonumber\\
&&\qquad\times  \left\{\left(\frac{\rho^2 I_1(\rho)}{(\rho-s)^4}\right)^{ \frac\sigma2} +
\left(\frac{ I_2(\rho)}{(\rho-s)^2}\right)^{\frac\sigma2} \right\}.\nonumber
\end{eqnarray*}

Thus, using $r/2\leq \rho\leq r$ and $I_1(\rho), I_2(\rho)\leq I(\rho)$, we get
\begin{eqnarray}
I(s) &\leq&  C \frac{\rho^{\frac {3}{2}+\sigma}}{(\rho-s)^{2+\sigma}}X^{\frac {2-\sigma}{2}}  
+ C\, Y^{\frac {2-\sigma}{2}} \frac{ \rho^{\sigma} I_1(\rho)^{\frac{1}{2}}}{(\rho-s)^{1+\sigma}}\nonumber\\
&&+\, C\, Y^{\frac {2-\sigma}{2}} \left(\frac{I_1(\rho)}{(\rho-s)^2}\right)^{\frac{1-\sigma}{2}} 
\left(\frac{ I_2(\rho)}{(\rho-s)^2}\right)^{\frac{\sigma}{2}} \nonumber\\
&\leq&    C \frac{r^{\frac {3}{2}+\sigma}}{(\rho-s)^{2+\sigma}}X^{\frac {2-\sigma}{2}}    
+ C\,  \frac{Y^{\frac {2-\sigma}{2}}  r^{\sigma} }{(\rho-s)^{1+\sigma}} I(\rho)^{\frac12} + C\,  \frac{Y^{\frac {2-\sigma}{2}} }{(\rho-s)} I(\rho)^{\frac12}.\nonumber
\end{eqnarray}

Then by Young's inequality it follows that 
\begin{eqnarray*}
I(s) &\leq & C \frac{r^{\frac {3}{2}+\sigma}}{(\rho-s)^{2+\sigma}}X^{\frac {2-\sigma}{2}} 
  + C\,\left[  \frac{ r^{2\sigma}}{(\rho-s)^{2+2\sigma}} +  \frac{1}{(\rho-s)^2} \right] Y^{ {2-\sigma}}\\
	&&+\, \frac{1}{2}I(\rho). 
\end{eqnarray*}

As this holds for all $r/2\leq s<\rho\leq r$ by Lemma \ref{Giusti-lem} we obtain
$$I(r/2) \leq C\, \frac{X^{\frac {2-\sigma}{2}}}{r^{1/2}} + C\, \frac{Y^{2-\sigma}}{r^2},$$
from which the proposition follows.

\end{proof}

By   Lemma \ref{bound-sigma} we have the following consequence of Proposition \ref{ABcontrol0}.

\begin{proposition} \label{ABcontrol1b} Suppose that $(u,p)$ is a suitable weak solution 
to the Navier-Stokes equations in  $Q_r(z_0)$. Then one has
\begin{equation*}
A(z_0, r/2) + B(z_0, r/2)\leq C[ C_{\alpha,\beta}(z_0, r)^{\frac{1}{\beta}} +  C_{\alpha,\beta}(z_0, r)^{\frac{2}{\beta}} +  D_{\alpha,\beta}(z_0, r)^{\frac{2}{\beta}}]
\end{equation*}
for any $\alpha\in [6/5,2]$ and $\beta=\frac{4\alpha}{7\alpha-6}$.
\end{proposition}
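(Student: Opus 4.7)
The plan is to view this as a direct corollary of Proposition \ref{ABcontrol0} by choosing the Sobolev exponent $\sigma$ so that $L^{-\sigma,2}$-norms on $|u|^2$ and $p$ are controlled by $L^\alpha$-norms via Lemma \ref{bound-sigma}.

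First I would pick $\sigma$ so that $\frac{6}{3+2\sigma}=\alpha$, i.e.
\[
\sigma \,=\, \frac{3(2-\alpha)}{2\alpha}.
\]
A short computation shows that $\sigma\in[0,1]$ precisely when $\alpha\in[6/5,2]$, and that with this choice
\[
2-\sigma \,=\, \frac{7\alpha-6}{2\alpha}, \qquad \frac{2}{2-\sigma} \,=\, \frac{4\alpha}{7\alpha-6} \,=\, \beta,\qquad \frac{3}{2-\sigma}\,=\,\frac{3\beta}{2}.
\]
These identities are exactly the bookkeeping that matches the exponents in $C_{\alpha,\beta}$ and $D_{\alpha,\beta}$ with the exponents appearing in $C_\sigma$ and in the `head pressure' term of Proposition \ref{ABcontrol0}.

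Next I would apply Lemma \ref{bound-sigma} with $f=|u|^2$ and $f=p$. Since $\big\||u|^2\big\|_{L^{\alpha}(B_r)}=\|u\|_{L^{2\alpha}(B_r)}^{2}$, the lemma gives
\[
\big\||u|^2\big\|_{L^{-\sigma,2}(B_r(x_0))} \,\le\, C\,\|u\|_{L^{2\alpha}(B_r(x_0))}^{2}, \qquad \|p\|_{L^{-\sigma,2}(B_r(x_0))} \,\le\, C\,\|p\|_{L^{\alpha}(B_r(x_0))},
\]
and by the triangle inequality in $L^{-\sigma,2}$,
\[
\big\||u|^2+2p\big\|_{L^{-\sigma,2}(B_r(x_0))} \,\le\, C\bigl(\|u\|_{L^{2\alpha}(B_r(x_0))}^{2}+\|p\|_{L^{\alpha}(B_r(x_0))}\bigr).
\]
Raising the first estimate to the power $\frac{2}{2-\sigma}=\beta$ and integrating in $t$, together with the exponent identity $\frac{3}{2-\sigma}=\frac{3\beta}{2}$, yields
\[
C_{\sigma}(z_0,r) \,\le\, C\, C_{\alpha,\beta}(z_0,r).
\]
Similarly, using $(a+b)^{\beta}\lesssim a^{\beta}+b^{\beta}$,
\[
r^{-\frac{3}{2-\sigma}}\!\!\int_{t_0-r^2}^{t_0}\big\||u|^2+2p\big\|_{L^{-\sigma,2}(B_r(x_0))}^{\frac{2}{2-\sigma}}\,dt \,\le\, C\bigl(C_{\alpha,\beta}(z_0,r)+D_{\alpha,\beta}(z_0,r)\bigr).
\]

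Finally I would substitute these two bounds into the conclusion of Proposition \ref{ABcontrol0}, using $\frac{2-\sigma}{2}=\frac{1}{\beta}$ and $2-\sigma=\frac{2}{\beta}$. This gives
\[
A(z_0,r/2)+B(z_0,r/2) \,\le\, C\,C_{\alpha,\beta}(z_0,r)^{\frac{1}{\beta}} + C\bigl(C_{\alpha,\beta}(z_0,r)+D_{\alpha,\beta}(z_0,r)\bigr)^{\frac{2}{\beta}},
\]
and a final application of $(a+b)^{2/\beta}\lesssim a^{2/\beta}+b^{2/\beta}$ produces the stated three-term bound. There is no real obstacle here beyond checking that the algebraic identities between $\sigma$, $\alpha$, and $\beta$ line up correctly; the only point requiring care is verifying that the admissible range $\sigma\in[0,1]$ translates exactly to $\alpha\in[6/5,2]$, which is what fixes the endpoints in the hypothesis.
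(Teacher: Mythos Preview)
Your proof is correct and follows exactly the route the paper indicates: it derives the proposition from Proposition~\ref{ABcontrol0} via Lemma~\ref{bound-sigma} by choosing $\sigma=\tfrac{3(2-\alpha)}{2\alpha}$ so that $\tfrac{6}{3+2\sigma}=\alpha$ and $\tfrac{2}{2-\sigma}=\beta$. The paper merely states ``By Lemma~\ref{bound-sigma} we have the following consequence of Proposition~\ref{ABcontrol0}'' without writing out the algebra, and your argument supplies precisely those details.
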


\section{$\epsilon$-regularity criteria}

In this section we prove Theorems \ref{VassThmp=1} and \ref{PhThm-sigma}. We start with the following lemma.
\begin{lemma}\label{boundH}
Let  $h$ be a harmonic function in $B_{2r}(x_0)$ and $0\leq \sigma\leq 1$. Then we have 
\begin{eqnarray*}
\norm{h}_{\Lp{2}{B_r(x_0)}}
&\leq&   C \norm{\frac{h}{r^{\sigma}}}_{\Lp{-\sigma,2}{B_{2r}(x_0)}}.
\end{eqnarray*}
\end{lemma}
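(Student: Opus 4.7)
\emph{Proof plan for Lemma \ref{boundH}.}

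The inequality is scale-invariant: if one sets $\tilde h(y)=h(x_0+ry)$ and runs through the definitions, one checks that $\|\tilde h\|_{L^2(B_1)}=r^{-3/2}\|h\|_{L^2(B_r(x_0))}$ while $\|\tilde h\|_{L^{-\sigma,2}(B_2)}=r^{-3/2-\sigma}\|h\|_{L^{-\sigma,2}(B_{2r}(x_0))}$. The factor $r^{-\sigma}$ on the right hand side of the claim is exactly what is needed to absorb this difference, so it suffices to prove
\[
\|h\|_{L^2(B_1)}\;\leq\;C\,\|h\|_{L^{-\sigma,2}(B_2)}
\]
assuming $h$ is harmonic on $B_2=B_2(0)$.

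By duality (with the roles of $L^2$ in itself), I can write
\[
\|h\|_{L^2(B_1)}=\sup_{\psi}\int_{\RR^3}h\,\psi\,dx,
\]
the supremum running over $\psi\in L^2(\RR^3)$ with $\supp\psi\subset\overline{B_1}$ and $\|\psi\|_{L^2}\leq 1$. The key idea is to exploit harmonicity through the mean value property in order to move $\psi$ off of $B_1$ and turn it into a legitimate test function for the $L^{-\sigma,2}(B_2)$ norm. Choose a radial mollifier $\rho\in C_c^\infty(B_{1/2})$ with $\int\rho=1$. For every $x\in B_1$, the ball $B_{1/2}(x)\subset B_{3/2}\subset B_2$, so the spherical mean value property, integrated against the radial weight, yields $h(x)=(\rho*h)(x)$ on $B_1$. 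Since $\psi$ is supported in $\overline{B_1}$ and $\rho$ is even,
\[
\int h\,\psi\,dx=\int(\rho*h)\psi\,dx=\int h\,(\rho*\psi)\,dx.
\]
Now $\rho*\psi$ is supported in $\overline{B_{3/2}}\subset\overline{B_2}$, hence is an admissible element of $\dot H^{\sigma}(\RR^3)$ supported in $\overline{B_2}$, and the duality defining $L^{-\sigma,2}(B_2)$ gives
\[
\int h\,(\rho*\psi)\,dx\;\leq\;\|h\|_{L^{-\sigma,2}(B_2)}\,\|\rho*\psi\|_{\dot H^\sigma(\RR^3)}.
\]

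It remains to show $\|\rho*\psi\|_{\dot H^\sigma}\leq C\|\psi\|_{L^2}$, which will close the estimate and, after sup over $\psi$, finish the proof. By Plancherel,
\[
\|\rho*\psi\|_{\dot H^\sigma}^2=\int_{\RR^3}|\xi|^{2\sigma}|\hat\rho(\xi)|^2|\hat\psi(\xi)|^2\,d\xi\;\leq\;\bigl\||\xi|^\sigma\hat\rho(\xi)\bigr\|_{L^\infty}^2\,\|\psi\|_{L^2}^2,
\]
and the multiplier bound $\||\xi|^\sigma\hat\rho\|_{L^\infty}<\infty$ holds for any $\sigma\in[0,1]$ because $\hat\rho$ is a Schwartz function (so it dominates $|\xi|^{-\sigma}$ at infinity) while $|\xi|^\sigma$ is bounded near the origin. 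Combining these bounds and rescaling back from $r=1$ yields the stated estimate. The only place any care is required is tracking the supports so that $\rho*\psi$ indeed lands inside $\overline{B_2}$; everything else is routine.
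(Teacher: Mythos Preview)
Your argument is correct and in fact more direct than the paper's. The paper proceeds by interpolation: with a cutoff $\varphi\in C_c^\infty(B_{3/2})$ equal to $1$ on $B_1$, it writes
\[
\|h\varphi\|_{L^2}\leq \|h\varphi\|_{L^{-\sigma,2}(B_{3/2})}^{1/2}\|h\varphi\|_{L^{\sigma,2}(B_{3/2})}^{1/2},
\]
bounds the first factor by $C\|h\|_{L^{-\sigma,2}(B_{3/2})}$ via a multiplier estimate from Kenig--Ponce--Vega, and bounds the second by $C\|h\|_{L^2(B_2)}$ using $\|\nabla h\|_{L^2(B_{3/2})}\leq C\|h\|_{L^2(B_2)}$ (Caccioppoli for harmonic functions). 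This leaves $\|h\|_{L^2(B_2)}^{1/2}$ on the right, which is then absorbed by a covering argument together with the iteration Lemma~\ref{Giusti-lem}. Your mollification trick, by contrast, exploits the mean value property to smooth the test function in one step, so no absorption or iteration is needed, and you avoid both the external multiplier lemma and the Giusti-type iteration. The paper's route is perhaps more flexible (the interpolation-and-absorb scheme would adapt to functions satisfying only a Caccioppoli inequality rather than an exact mean value identity), but for genuinely harmonic $h$ your argument is shorter and self-contained.
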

\begin{proof}
The case $\sigma=0$ is obvious. We thus assume that  $0<\sigma\leq 1$.
Let $f$ be a harmonic function in $B_2(0)$. Let $\varphi \in \Ci{B_{3/2}(0)}$ be such that $0\leq \varphi \leq 1$, $\varphi \equiv 1$ in $B_1(0)$ and $|\nabla \varphi|\leq c$. Hence, $f\varphi \in  \Ci{B_{3/2}(0)}$ and
\begin{align}\label{hvar}
\norm{f\varphi}_{\Lp{2}{B_{3/2}(0)}}&\leq
\norm{f\varphi}_{\Lp{-\sigma,2}{B_{3/2}(0)}}^{\frac 12}
\norm{f\varphi}_{\Lp{\sigma,2}{B_{3/2}(0)}}^{\frac 12}.
\end{align}

Observe that, for any $g \in \Lp{\sigma,2}{B_{3/2}(0)}$, by \cite[Theorem A.12]{KPV} we have 
$$\norm{g\varphi}_{\Lp{\sigma,2}{B_{3/2}(0)}}\leq C \norm{g}_{\Lp{\sigma,2}{B_{3/2}(0)}},$$
and thus
\begin{eqnarray*}
 \int_{B_{3/2}(0)} f  \varphi  g dx &\leq&\norm{f}_{\Lp{-\sigma,2}{B_{3/2}(0)}}\norm{g\varphi}_{\Lp{\sigma,2}{B_{3/2}(0)}} \\
&\leq& C \norm{f}_{\Lp{-\sigma,2}{B_{3/2}(0)}}\norm{g}_{\Lp{\sigma,2}{B_{3/2}(0)}}.
\end{eqnarray*}

 This means that 
\begin{equation} \label{est:hI}
\norm{f\varphi}_{\Lp{-\sigma,2}{B_{3/2}(0)}}\leq C \norm{f}_{\Lp{-\sigma,2}{B_{3/2}(0)}}.  
\end{equation}

%{\tt Phuc, the above was what you did but I still don't make sense of it... maybe when you are back we can go over this again. (The boxed part ) }

Also,
\begin{eqnarray}
\norm{f\varphi}_{\Lp{\sigma,2}{B_{3/2}(0)}}&\leq&
\norm{f\varphi}^{1-\sigma}_{\Lp{2}{B_{3/2}(0)}}\norm{\nabla(f\varphi)}^{\sigma}_{\Lp{2}{B_{3/2}(0)}}\nonumber\\
&\leq&
\norm{f\varphi}^{1-\sigma}_{\Lp{2}{B_{3/2}(0)}}\norm{(\nabla f) \varphi +  f(\nabla \varphi)}^{\sigma}_{\Lp{2}{B_{3/2}(0)}}\nonumber\\
&\leq&c
\norm{f}^{1-\sigma}_{\Lp{2}{B_{3/2}(0)}}\left(\norm{\nabla f}^{\sigma}_{\Lp{2}{B_{3/2}(x_0)}}+
\norm{f}^{\sigma}_{\Lp{2}{B_{3/2}(0)}}\right)\nonumber\\
&\leq&c
\norm{f}^{1-\sigma}_{\Lp{2}{B_{3/2}(0)}}\left(\norm{f}^{\sigma}_{\Lp{2}{B_2(0)}}+
\norm{f}^{\sigma}_{\Lp{2}{B_{3/2}(0)}}\right)\nonumber\\
&\leq&c \norm{f}_{\Lp{2}{B_2(0)}}\label{est:hII}.
\end{eqnarray}
Here in the 4th inequality we used the fact that $f$ is harmonic in $B_2(0)$.

Hence, \eqref{hvar}, \eqref{est:hI} and \eqref{est:hII} yield
\begin{align} \label{h-bound-1}
\norm{f}_{\Lp{2}{B_1(0)}}\leq \norm{f\varphi}_{\Lp{2}{B_{3/2}(0)}}& \leq C \norm{f}_{\Lp{-\sigma,2}{B_{3/2}(0)}}^{\frac 12}
\norm{f}^{\frac{1}2}_{\Lp{2}{B_2(0)}}.
\end{align}

Now for $r>0$, let   $h$ be a harmonic function in $B_{2r}(x_0)$. 
We define $f(x)=h(rx+x_0)$ for $x\in B_2(0)$.   Then $f$ is harmonic in $B_{2}(0)$. 

Note that for any $\varphi\in \Lp{\sigma,2}{B_{3/2}(0)}$ we have 
\begin{align*}
 &\norm{\varphi\left(\frac{\cdot-x_0}r\right)}_{\Lp{\sigma,2}{B_{3r/2}(x_0)}}
 =\left(\int_{\RR^3}|\xi|^{2\sigma}\left| \widehat{\varphi\left(\frac{\cdot-x_0}r\right) }\right|^2d\xi\right)^{\frac12}\\
 &\qquad = r^{3} \left(\int_{\RR^3}|\xi|^{2\sigma}\left| \widehat{\varphi}\left(r\xi\right) \right|^2d\xi\right)^{\frac12}\\
  &\qquad= r^{3} \left(\int_{\RR^3}\left|\frac{\zeta}{r}\right|^{2\sigma}\left| \widehat{\varphi}\left(\zeta \right) \right|^2 r^{-3}d\zeta \right)^{\frac12}
  =r^{\frac32-\sigma} \norm{\varphi}_{\Lp{\sigma,2}{B_{3/2}(0)}}.
  \end{align*}
Thus for such $\varphi$,
\begin{eqnarray*}
\lefteqn{\int_{B_{3/2}(0)} h(rx +x_0)\varphi(x)dx=r^{-3} \int_{B_{3r/2}(x_0)} h(y)\varphi \left(\frac{y-x_0}{r}\right)dy}\\
&\leq& r^{-3}\norm{h}_{\Lp{-\sigma,2}{B_{3r/2}(x_0)}} \norm{\varphi\left(\frac{\cdot-x_0}r\right)}_{\Lp{\sigma,2}{B_{3r/2}(x_0)}}\\
&\leq& r^{-\frac{3}{2}-\sigma}\norm{h}_{\Lp{-\sigma,2}{B_{3r/2}(x_0)}} \norm{\varphi}_{\Lp{\sigma,2}{B_{3/2}(0)}}.
\end{eqnarray*}

This implies that 
\begin{eqnarray*}
\norm{f}_{\Lp{-\sigma,2}{B_{3/2}(0)}}\leq  r^{-\frac{3}{2}}\norm{\frac{h}{r^{\sigma}}}_{\Lp{-\sigma,2}{B_{3r/2}(x_0)}},
\end{eqnarray*}
and by substituting into  \eqref{h-bound-1} we have
\begin{eqnarray*}
\left(\fint_{B_r(x_0)} |h|^2 dx\right)^\frac12
\leq C r^{-\frac{3}{4}}\norm{\frac{h}{r^{\sigma}}}_{\Lp{-\sigma,2}{B_{2r}(x_0)}}^{\frac12} 
  \left(\fint_{B_{2r}(x_0)} |h|^2 dx\right)^\frac14.
\end{eqnarray*}

Or equivalently, 
\begin{equation}\label{1and4}
\int_{B_r(x_0)} |h|^2 dx \leq C \norm{\frac{h}{r^{\sigma}}}_{\Lp{-\sigma,2}{B_{2r}(x_0)}}  \left(\int_{B_{2r}(x_0)} |h|^2 dx\right)^\frac12.
\end{equation}

Let $r\leq s<t \leq 2r$.  The ball $B_s(x_0)$ can be covered by a collection of balls $\left\{B_i=B_{\frac{t-s}{2}}(x_i):\; x_i\in B_s(x_0)\right\}$, in such a way that  each point $y\in \RR^n$ belongs to at most $N=N(n)$ balls in the collection  $\left\{2B_i=B_{t-s}(x_i)\right\}$, that is, 
$$\sum_i \chi_{_{2B_i}}(y)\leq N(n).$$
Then applying \eqref{1and4} to the balls $B_i$, we find
\begin{eqnarray*}
\int_{B_i} |h|^2 dx
&\leq&  C\norm{\frac{h}{(t-s)^{\sigma}}}_{\Lp{-\sigma,2}{2B_{i}}} \left(\int_{2B_{i}} |h|^2 dx\right)^\frac12\\
&\leq&  C\norm{\frac {h}{(t-s)^{\sigma}}}_{\Lp{-\sigma,2}{B_{2r}(x_0)}} \left(\int_{2B_{i}} |h|^2 dx\right)^\frac12.
\end{eqnarray*}
Thus,
\begin{eqnarray*}
\int_{B_s(x_0)} |h|^2 dx&\leq& \sum_i \int_{B_i} |h|^2 dx\\
&\leq&  C\norm{\frac {h}{(t-s)^{\sigma}}}_{\Lp{-\sigma,2}{B_{2r}(x_0)}}  \sum_i \left(\int_{2B_{i}} |h|^2 dx\right)^\frac12.
\end{eqnarray*}
Note that 
\begin{eqnarray*}
 \sum_i\int_{2B_{i}} |h|^2 dx= \int_{B_{t}(x_0)} |h|^2 \sum_i \chi_{_{2B_i}}(x) dx\leq  N(n)\int_{B_{t}(x_0)} |h|^2 dx,
\end{eqnarray*}
and thus 
\begin{eqnarray*}
\int_{B_s(x_0)} |h|^2 dx \leq  C(n)\norm{\frac {h}{(t-s)^{\sigma}}}_{\Lp{-\sigma,2}{B_{2r}(x_0)}} \left(\int_{B_{t}(x_0)} |h|^2 dx\right)^\frac12.
\end{eqnarray*}
Then by Young's inequality it follows that 
\begin{eqnarray*}
\int_{B_s(x_0)} |h|^2 dx \leq  C (t-s)^{-2\sigma}  \norm{h}_{\Lp{-\sigma,2}{B_{2r}(x_0)}}^2 + \frac 1 2 \int_{B_{t}(x_0)} |h|^2 dx.
\end{eqnarray*}
Thus applying  Lemma \ref{Giusti-lem} we have 
\begin{eqnarray*}
\int_{B_r(x_0)} |h|^2 dx
&\leq& C r^{-2\sigma} \norm{h}_{\Lp{-\sigma,2}{B_{2r}(x_0)}}^2 
\end{eqnarray*}
as desired.
\end{proof} 

The next lemma provides bounds for the pressure.
\begin{lemma}\label{Dbar}
Suppose that $(u,p)$ is a suitable weak solution 
to the Navier-Stokes equations in  $Q_{\rho}(z_0)$. For any $r\in (0, \rho/2]$ we have the following bounds:
\begin{eqnarray}\label{p}
\lefteqn{r^{-\frac 3 2} \int_{t_0-r^2}^{t_0} \norm{p(\cdot, t)}_{L^2(B_r(x_0))} dt}\nonumber\\
&\leq& C  \rho^{-3}\int_{t_0-\rho^2}^{t_0} \norm{p}_{L^{1}(B_\rho(x_0))} dt+   C  \Big(\frac{\rho}{r}\Big)^{3/2} A(z_0,\rho)^{1/4} B(z_0,\rho)^{3/4},
\end{eqnarray}
\begin{eqnarray}\label{pbar}
\lefteqn{r^{-\frac 3 2} \int_{t_0-r^2}^{t_0} \norm{p(\cdot, t)- [p(\cdot, t)]_{x_0,r}}_{L^2(B_r(x_0))} dt}\nonumber\\
 &\leq& C\, \Big(\frac{r}{\rho}\Big) \, \rho^{-3} \int_{t_0-\rho^2}^{t_0} \norm{p(\cdot, t)- [p(\cdot, t)]_{x_0,\rho}}_{L^1(B_\rho(x_0))} dt + \\
&& +\,  C  \Big(\frac{\rho}{r}\Big)^{3/2} A(z_0,\rho)^{1/4} B(z_0,\rho)^{3/4},\nonumber
\end{eqnarray}
and
\begin{eqnarray}\label{DDtiti}
\lefteqn{r^{-3}\int_{t_0-r^2}^{t_0} \norm{p}_{L^{1}(B_r(x_0))} dt} \nonumber\\
&\leq& C  \rho^{-\frac3 2 -\sigma} \int_{t_0-\rho^2}^{t_0}\norm{p}_{L^{-\sigma,2}(B_\rho(x_0))} dt +\\
&& +\,  C  \Big(\frac{\rho}{r}\Big)^{3} A(z_0,\rho)^{1/4+\sigma/2} B(z_0,\rho)^{3/4-\sigma/2}\nonumber
\end{eqnarray}
for any $\sigma \in \left[0,3/2\right)$.
\end{lemma}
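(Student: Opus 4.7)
The plan is to use the classical Caffarelli--Kohn--Nirenberg pressure decomposition and treat the three inequalities as variants of the same argument with different exponents. Fix $x_0$ and a cutoff $\chi\in C_0^\infty(B_{\rho}(x_0))$ with $\chi\equiv 1$ on $B_{\rho/2}(x_0)$ and $|\nabla^k\chi|\le C\rho^{-k}$. Taking the divergence of \eqref{NSE} gives $-\Delta p=\partial_i\partial_j(u_iu_j)$, so I will split $p=p_1+p_2$ where
\[
p_1(x,t)=\sum_{i,j}R_iR_j\bigl[\chi(u_iu_j)\bigr](x,t),
\]
with $R_i$ the Riesz transforms on $\RR^3$, and $p_2=p-p_1$. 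Since $\chi\equiv 1$ on $B_{\rho/2}(x_0)$, the function $p_2$ is harmonic there.

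For \eqref{p}, I would estimate the two pieces separately. Calder\'on--Zygmund gives $\|p_1(\cdot,t)\|_{L^2(\RR^3)}\le C\,\||u|^2\|_{L^2(B_\rho(x_0))}=C\|u\|_{L^4(B_\rho(x_0))}^2$; combining Lemma~\ref{Sob-inte} applied to $u-\bar u$ with the obvious $L^2$ control of the mean $\bar u$, and then using H\"older in time together with the definitions of $A$ and $B$, yields $r^{-3/2}\int_{t_0-r^2}^{t_0}\|p_1\|_{L^2(B_r)}dt\le C(\rho/r)^{3/2}A(z_0,\rho)^{1/4}B(z_0,\rho)^{3/4}$. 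For the harmonic piece I use the pointwise mean-value bound $\|p_2\|_{L^\infty(B_{\rho/4}(x_0))}\le C\rho^{-3}\|p_2\|_{L^1(B_{\rho/2}(x_0))}$, so for $r\le\rho/2$ one gets $\|p_2\|_{L^2(B_r)}\le Cr^{3/2}\rho^{-3}\|p_2\|_{L^1(B_{\rho/2}(x_0))}$; writing $\|p_2\|_{L^1}\le\|p\|_{L^1}+\|p_1\|_{L^1}\le\|p\|_{L^1}+\rho^{3/2}\|p_1\|_{L^2}$ reduces the second piece back to the previous $p_1$ estimate.

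For \eqref{pbar} I would observe that $p$ solves the same equation after subtracting any constant, so the decomposition can be applied to $p-[p]_{B_\rho(x_0)}$. The bound on $p_1-[p_1]_{B_r}$ proceeds exactly as for $p_1$ above. For $p_2-[p_2]_{B_r}$, the key improvement is the interior gradient estimate for harmonic functions $\|\nabla p_2\|_{L^\infty(B_{\rho/4}(x_0))}\le C\rho^{-4}\|p_2-[p_2]_{B_{\rho/2}(x_0)}\|_{L^1(B_{\rho/2}(x_0))}$ (constants are killed by $\nabla$), which combined with the mean-value inequality $\|p_2-[p_2]_{B_r}\|_{L^2(B_r)}\le Cr\cdot r^{3/2}\|\nabla p_2\|_{L^\infty(B_r)}$ produces the extra factor $(r/\rho)$ on the right-hand side; the $p_1$ remainder in the $L^1$ norm is again absorbed via the CZ bound.

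For \eqref{DDtiti} the novelty is the appearance of $\|p\|_{L^{-\sigma,2}}$. For the harmonic part $p_2$, since $p_2$ is harmonic in $B_{\rho/2}(x_0)$, Lemma~\ref{boundH} (applied with radius $\rho/4$) gives $\|p_2\|_{L^2(B_{\rho/4}(x_0))}\le C\rho^{-\sigma}\|p_2\|_{L^{-\sigma,2}(B_{\rho/2}(x_0))}$, whence by a further mean-value step $\|p_2\|_{L^1(B_r)}\le Cr^3\rho^{-3/2-\sigma}\|p_2\|_{L^{-\sigma,2}(B_{\rho/2}(x_0))}$; the remainder $\|p_1\|_{L^{-\sigma,2}(B_{\rho/2})}$ is controlled by $C\|p_1\|_{L^{6/(3+2\sigma)}(B_\rho)}$ via Lemma~\ref{bound-sigma}. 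For $p_1$ itself, apply CZ in $L^{6/(3+2\sigma)}$ to get $\|p_1\|_{L^{6/(3+2\sigma)}(\RR^3)}\le C\|u\|_{L^{12/(3+2\sigma)}(B_\rho)}^2$, pass to $L^1(B_r)$ by H\"older losing a factor $r^{(3-2\sigma)/2}$, and apply Lemma~\ref{Sob-inte} with $q=12/(3+2\sigma)\in[2,6]$ (legal since $\sigma\in[0,3/2)$) after subtracting the spatial mean. Arithmetic with the exponents produces $\|\nabla u\|_{L^2}^{(3-2\sigma)/2}\|u\|_{L^2}^{(1+2\sigma)/2}$ inside the time integral, and H\"older in $t$ with conjugate pair $(4/(3-2\sigma),4/(1+2\sigma))$ converts this to $A(z_0,\rho)^{1/4+\sigma/2}B(z_0,\rho)^{3/4-\sigma/2}$ with the correct scaling to match $(\rho/r)^3$ after division by $r^3$.

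The main obstacle I expect is the exponent bookkeeping in \eqref{DDtiti}: matching the Sobolev interpolation powers to the prescribed $A^{1/4+\sigma/2}B^{3/4-\sigma/2}$ and treating the mean $\bar u$ correction (for which Lemma~\ref{Sob-inte} does not directly apply) so that all leftover terms can be bounded by a constant multiple of the same quantity after using $A\le A^{\theta}B^{1-\theta}$-type crude estimates against the generous $(\rho/r)^3$ factor. The $L^{-\sigma,2}$ estimate of the Riesz-transform part $p_1$, which is needed to close the harmonic-part argument, is the other delicate step, but it reduces by Lemma~\ref{bound-sigma} to a classical $L^{6/(3+2\sigma)}$ bound.
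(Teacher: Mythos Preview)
Your overall strategy---the CKN pressure decomposition into a Calder\'on--Zygmund piece and a harmonic remainder, with mean-value bounds for \eqref{p}, \eqref{pbar} and Lemma~\ref{boundH} for \eqref{DDtiti}---matches the paper's. The gap is in the Calder\'on--Zygmund piece. With your choice $p_1=R_iR_j[\chi u_iu_j]$, CZ yields $\|p_1\|_{L^q}\le C\|u\|_{L^{2q}(B_\rho)}^2$, and after splitting $u=(u-\bar u)+\bar u$ the mean part contributes a term comparable to $A(z_0,\rho)$ that is \emph{not} controlled by $A^{1/4}B^{3/4}$ (or $A^{1/4+\sigma/2}B^{3/4-\sigma/2}$). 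Your proposed remedy, ``$A\le A^\theta B^{1-\theta}$-type crude estimates,'' is simply false: take $u\equiv c\ne 0$ constant and $p\equiv 0$. Then $B=0$, $A>0$, and the right-hand sides of \eqref{p}--\eqref{DDtiti} vanish, yet your $p_1=R_iR_j[\chi c_ic_j]\not\equiv 0$ and your separate bound on $\|p_1\|_{L^2(B_r)}$ is strictly positive; the same bad term reappears when you estimate $\|p_2\|_{L^1}\le \|p\|_{L^1}+\|p_1\|_{L^1}$. So the argument, as written, cannot close.

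The paper's fix is to subtract the mean \emph{inside} the Riesz transform: set
\[
\tilde{p}_{x_0,\rho}=R_iR_j\bigl[(u_i-[u_i]_{x_0,\rho})(u_j-[u_j]_{x_0,\rho})\,\chi_{B_\rho(x_0)}\bigr].
\]
Because ${\rm div}\, u=0$, one has $\int_{B_\rho}(u_i-[u_i])(u_j-[u_j])D_{ij}\varphi=\int_{B_\rho}u_iu_jD_{ij}\varphi$ for every $\varphi\in C_0^\infty(B_\rho(x_0))$, so $h:=p-\tilde{p}_{x_0,\rho}$ is still harmonic (in fact on all of $B_\rho(x_0)$, not merely $B_{\rho/2}$). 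Now CZ gives $\|\tilde{p}_{x_0,\rho}\|_{L^q}\le C\|u-[u]_{x_0,\rho}\|_{L^{2q}}^2$ directly, Lemma~\ref{Sob-inte} applies with no leftover mean term, and the exponents come out exactly as $A^{1/4}B^{3/4}$ (resp.\ $A^{1/4+\sigma/2}B^{3/4-\sigma/2}$). With this single modification your outline goes through; the rest of your bookkeeping for \eqref{DDtiti} is correct.
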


\begin{proof} Let $h_{x_0, \rho}=h_{x_0, \rho}(\cdot, t)$ be a function on $B_\rho(x_0)$ for a.e. $t$ such that
$$h_{x_0, \rho}=p-\tilde{p}_{x_0, \rho}\quad {\rm in~}  B_\rho(x_0),$$ 
where $\tilde{p}_{x_0, \rho}$ is defined by 
$$\tilde{p}_{x_0, \rho}=R_iR_j[(u_i-[u_i]_{x_0,\rho})(u_j-[u_j]_{x_0,\rho})\chi_{B_\rho(x_0)}].$$
Here $R_i=D_i(-\Delta)^{-\frac{1}{2}}$, $i=1,2,3$, is the $i$-th Riesz transform,
and we  used the notation 
$$[f]_{x_0, \rho}:=\fint_{B_\rho(x_0)} f(x) \, dx=\frac{1}{|B_\rho(x_0)|}\int_{B_\rho(x_0)} f(x) \, dx.$$
to denote the spatial average of a function $f$ over the ball $B_\rho(x_0)$.

 Note that for any $\varphi\in C_0^{\infty}(B_\rho(x_0))$, we have
\begin{eqnarray*}
-\int_{B_\rho(x_0)} \tilde{p}_{x_0, \rho}\Delta \varphi dx&=&\int_{B_{\rho}(x_0)}(u_i-[u_i]_{x_0,\rho})(u_j-[u_j]_{x_0,\rho}) D_{ij}\varphi\, dx\\
&=&\int_{B}u_i u_j D_{ij}\varphi\, dx,
\end{eqnarray*}
which follows from  the properties $-R_i R_j(\Delta \varphi)=D_{ij}\varphi$ and ${\rm div}\, u=0$.
Thus, as $p$ also solves
$$-\Delta p={\rm div}\, {\rm div} (u\otimes u)$$
in the  distributional sense, we see that  $h_{x_0, \rho}$ is harmonic in  $B_{\rho}(x_0)$ for a.e. $t$.
Then   for $r\in (0, \rho/2]$ it holds that
\begin{eqnarray*}
 \left(\fint_{B_{r}(x_0)} |h_{x_0, \rho}|^{2} dx\right)^{\frac 1 2} \leq  C \fint_{B_{\rho}(x_0)} |h_{x_0, \rho}| dx
\end{eqnarray*}
and
\begin{eqnarray*}
 \left(\fint_{B_{r}(x_0)} |h_{x_0, \rho}- [h_{x_0, \rho}]_{x_0,r}|^{2} dx\right)^{\frac 1 2}&\leq& C \frac{r}{\rho} \fint_{B_{\rho}(x_0)} |h_{x_0, \rho}- [h_{x_0, \rho}]_{x_0,\rho}| dx.
\end{eqnarray*}

Then using $p=\tilde{p}_{x_0,\rho} +h_{x_0,\rho}$, they give
\begin{eqnarray}\label{NOAVG}
\lefteqn{\int_{B_r(x_0)} |p(x,t)|^2 dx}\nonumber\\
 &\leq& 2\int_{B_\rho(x_0)} |\tilde{p}_{x_0, \rho}|^2 dx  +  C \frac{r^3}{\rho^{6}}\norm{{h_{x_0, \rho}} }_{L^1(B_{\rho}(x_0))}^2,
\end{eqnarray}
and
\begin{eqnarray}\label{WAVG}
\lefteqn{\int_{B_r(x_0)} |p(x,t)-[p(\cdot,t)]_{x_0,r}|^2 dx}\nonumber\\
&\leq& 2\int_{B_\rho(x_0)} |\tilde{p}_{x_0, \rho}|^2 dx  +  C \frac{r^5}{\rho^{8}}\norm{{h_{x_0, \rho}} - [h_{x_0,\rho}]_{x_0,\rho}}_{L^1(B_{\rho}(x_0))}^2.
\end{eqnarray}

Now by \eqref{NOAVG} and $h_{x_0,\rho}=p-\tilde{p}_{x_0,\rho} $ we obtain
\begin{eqnarray}\label{pptil}
\int_{B_r(x_0)} |p(x,t)|^2 dx &\leq&  2\int_{B_\rho(x_0)} |\tilde{p}_{x_0, \rho}|^2 dx+\nonumber\\
&&+\,  C \frac{r^3}{\rho^{6}}\left(\norm{  {\tilde p_{x_0, \rho}} }_{L^1(B_{\rho}(x_0))}^2+\norm{  p }_{L^1(B_{\rho}(x_0))}^2 \right)\nonumber\\
&\leq& C \int_{B_\rho(x_0)} |\tilde{p}_{x_0, \rho}|^2 dx  +  C \frac{r^3}{\rho^{6}}\norm{ p }_{L^1(B_{\rho}(x_0))}^2,
\end{eqnarray}
where  we used H\"older's inequality and the fact that $r/\rho\leq 1/2$.

On the other hand, by the Calder\'on-Zygmund estimate and Lemma \ref{Sob-inte}   we find
\begin{eqnarray}\label{ptil1}
\lefteqn{\int_{B_\rho(x_0)}|\tilde{p}_{x_0,\rho}|^{2} dx\leq C \int_{B_\rho(x_0)}| u-[u]_{x_0,\rho}|^{4} dx}\nonumber\\
&\leq& C \Big(\int_{B_\rho(x_0)}| \nabla u|^{2} dx\Big)^{3/2} \Big(\int_{B_\rho(x_0)}|u|^{2} dx\Big)^{1/2}.
\end{eqnarray}

Combining \eqref{pptil} and \eqref{ptil1}  we have 
\begin{eqnarray*}
\lefteqn{\norm{ p }_{L^2(B_{r}(x_0))} \leq  C \frac{r^{3/2}}{\rho^{3}}\norm{ p }_{L^1(B_{\rho}(x_0))}+}\\
&& +\, C \Big(\int_{B_\rho(x_0)}| \nabla u|^{2} dx\Big)^{3/4} \Big(\int_{B_\rho(x_0)}|u|^{2} dx\Big)^{1/4}.
\end{eqnarray*}

Integrating the last bound with respect to $r^{-3/2}dt$ over the interval $(t_0-r^2, t_0)$ and using H\"older's inequality  we obtain inequality
\eqref{p}. 

Likewise, using \eqref{WAVG}  instead of \eqref{NOAVG} and arguing similarly we obtain inequality \eqref{pbar}. We remark that in this case we also need to use the elementary fact that 
$$\norm{{h_{x_0, \rho}} - [h_{x_0,\rho}]_{x_0,\rho}}_{L^1(B_{\rho}(x_0))}\leq 2 \norm{{h_{x_0, \rho}} - [p(\cdot,t)_{x_0,\rho}]_{x_0,\rho}}_{L^1(B_{\rho}(x_0))}.$$

As for \eqref{DDtiti}, we first bound
\begin{eqnarray*}
\int_{B_r(x_0)} |p(x,t)|dx &\leq&  \int_{B_\rho(x_0)} |\tilde{p}_{x_0, \rho}| dx + \int_{B_r(x_0)} |h_{x_0, \rho}|dx\\
&\leq&\int_{B_\rho(x_0)} |\tilde{p}_{x_0, \rho}| dx + C r^3 \left(\fint_{B_{\rho/2}(x_0)} |h_{x_0, \rho}|^2dx \right)^{1/2}\\
&\leq& \int_{B_\rho(x_0)} |\tilde{p}_{x_0, \rho}| dx  +  C \frac{r^3}{\rho^{\frac{3}{2}+\sigma}}\norm{  {h_{x_0, \rho}} }_{\Lp{-\sigma,2}{B_{\rho}(x_0)}}\\
&\leq&  \int_{B_\rho(x_0)} |\tilde{p}_{x_0, \rho}| dx+\\
&+& C \frac{r^3}{\rho^{\frac{3}{2} +\sigma}}\left(\norm{  {\tilde p_{x_0, \rho}} }_{\Lp{-\sigma,2}{B_{\rho}(x_0)}}+\norm{  p }_{\Lp{-\sigma,2}{B_{\rho}(x_0)}} \right).
\end{eqnarray*}
Here we used Lemma \ref{boundH} in the third inequality.

Now using H\"older's inequality, Lemma \ref{bound-sigma} with $\sigma\in[0,3/2)$, and $r/\rho\leq 1/2$,  we have
\begin{eqnarray*}
\int_{B_\rho(x_0)} |\tilde{p}_{x_0, \rho}| dx&+& C \frac{r^3}{\rho^{\frac{3}{2}+\sigma}} \norm{ {\tilde{p}_{x_0, \rho}} }_{\Lp{-\sigma,2}{B_{\rho}(x_0)}}\\
 & \leq& C \rho^{3/2-\sigma} \norm{ {\tilde{p}_{x_0, \rho}} }_{\Lp{\frac{6}{3+2\sigma}}{B_{\rho}(x_0)}}.
\end{eqnarray*}

Thus,
\begin{eqnarray}\label{pptil2}
\int_{B_r(x_0)} |p(x,t)| dx &\leq& C \rho^{3/2-\sigma} \norm{ {\tilde{p}_{x_0, \rho}} }_{\Lp{\frac{6}{3+2\sigma}}{B_{\rho}(x_0)}}\\
&& +\,  C \frac{r^3}{\rho^{\frac{3}{2}+\sigma}}\norm{ p }_{\Lp{-\sigma,2}{B_{\rho}(x_0)}}\nonumber.
\end{eqnarray}

As before, the $L^{\frac{6}{3+2\sigma}}$ norm of $\tilde{p}_{x_0, \rho}$ is treated using Calder\'on-Zygmund estimate and Lemma \ref{Sob-inte} which give
\begin{eqnarray}\label{ptil12}
\lefteqn{\norm{ {\tilde{p}_{x_0, \rho}} }_{\Lp{\frac{6}{3+2\sigma}}{B_{\rho}(x_0)}}} \nonumber\\
&\leq& C \Big(\int_{B_\rho(x_0)}| \nabla u|^{2} dx\Big)^{3/4-\sigma/2} \Big(\int_{B_\rho(x_0)}|u|^{2} dx\Big)^{1/4+\sigma/2}.
\end{eqnarray}

Combining \eqref{pptil2}, \eqref{ptil12}  we have 
\begin{eqnarray*}
\lefteqn{\int_{B_r(x_0)} |p(x,t)| dx \leq  C \frac{r^3}{\rho^{3/2+\sigma}}\norm{ p }_{\Lp{-\sigma,2}{B_{\rho}(x_0)}}+}\\
&& +\, C \rho^{3/2-\sigma}\Big(\int_{B_\rho(x_0)}| \nabla u|^{2} dx\Big)^{3/4-\sigma/2} \Big(\int_{B_\rho(x_0)}|u|^{2} dx\Big)^{1/4+\sigma/2},
\end{eqnarray*}
from which integrating in $t$ we obtain \eqref{DDtiti}.
\end{proof}

We now recall  the following $\epsilon$-regularity criterion for suitable weak solutions to the Navier-Stokes equations (see \cite[Lemma 3.3]{SS}).
\begin{lemma}\label{SS-02} There exists a positive number $\epsilon_{\star}$ such that the following property holds.  If $(u,p)$ be a suitable solution to the Navier-Stokes equations in $Q_{R_\star}(z_0)$ for some $R_\star>0$ such that 
$$\sup_{0<r < R_\star}A(z_0,r)\leq \epsilon_\star,$$ 
then  $z_0$ is a regular point of $u$. 
\end{lemma}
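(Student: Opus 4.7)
Without loss of generality take $z_0=0$. The plan is to reduce to the classical Caffarelli-Kohn-Nirenberg $\epsilon$-regularity criterion, which produces an absolute $\epsilon_{\textrm{CKN}}>0$ such that $\limsup_{r\to 0^+} B(0,r)\leq \epsilon_{\textrm{CKN}}$ forces $0$ to be regular. Thus the main task is to bootstrap the hypothesis $\sup_{0<r<R_\star} A(0,r)\leq \epsilon_\star$ into smallness of $B(0,r)$ at sufficiently fine scales.

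I would apply Proposition \ref{ABcontrol0} with $\sigma=0$. The kinetic quantity $C_0(0,r)=r^{-3/2}\int_{-r^2}^0 \|u\|_{L^4(B_r)}^2\,dt$ is controlled via Lemma \ref{Sob-inte} (applied to $u-[u]_{0,r}$ together with local Poincar\'e, and H\"older in time) by
\[
C_0(0,r) \leq C\, A(0,r)^{1/4} B(0,r)^{3/4},
\]
and the $|u|^2$ contribution to the head-pressure term is handled identically. For the genuine pressure contribution, the $L^2$ bound \eqref{p} of Lemma \ref{Dbar} with $\rho=2r$ reduces $r^{-3/2}\int\|p\|_{L^2(B_r)}\,dt$ to an $A^{1/4}B^{3/4}$ remainder at scale $2r$ plus the $L^1$ pressure norm at scale $2r$, and this $L^1$ pressure can be further relayed outward via \eqref{DDtiti}. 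Substituting back into Proposition \ref{ABcontrol0} and applying Young's inequality produces a recursion of the schematic form
\[
A(0,r/2) + B(0,r/2) \leq \frac{1}{2}\bigl[A(0,2r) + B(0,2r)\bigr] + C\,\Phi\bigl(A(0,2r),\, \text{outer data}\bigr),
\]
where the small-$A$ hypothesis is used both to absorb factors of $B^{3/4}$ by Young and to force $\Phi$ to be small.

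Iterating along a geometric sequence of scales $r_k = 2^{-k} R_\star$ and invoking an iteration lemma in the spirit of Lemma \ref{Giusti-lem} then yields $\limsup_{r\to 0^+} B(0,r) \leq C\,\epsilon_\star^{\gamma}$ for some $\gamma>0$. Choosing $\epsilon_\star$ so that $C\,\epsilon_\star^\gamma < \epsilon_{\textrm{CKN}}$ and applying the classical CKN criterion concludes the proof. The main obstacle is the pressure iteration: since the hypothesis only controls $A$ and the pressure is nonlocal, the factors $(\rho/r)^{3/2}$ and $(\rho/r)^3$ appearing in Lemma \ref{Dbar} must be kept bounded by restricting to geometric scale ratios $\rho=2r$, and the accumulated constants across the $O(\log(R_\star/r))$ iterations needed to reach the outer integrability $p\in L^{3/2}(Q_{R_\star})$ built into Definition \ref{SWS} must be tracked carefully so as not to destroy the smallness used in the Young-absorption step. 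Closing this iteration cleanly is the technical heart of the argument.
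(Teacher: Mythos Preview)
The paper does not prove this lemma; it simply recalls it from \cite[Lemma 3.3]{SS}. So there is no ``paper's own proof'' to compare against, and your sketch must be judged on its own merits.

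Your overall strategy---bootstrap uniform smallness of $A$ into smallness of $B$ at fine scales via the local energy inequality and a pressure decomposition, then invoke the classical CKN $\limsup$ criterion---is the correct one and is indeed how the result is proved in \cite{SS}. The kinetic estimates you quote are essentially right (the bound on $C_0$ should read $C_0(0,r)\le C\bigl[A(0,r)^{1/4}B(0,r)^{3/4}+A(0,r)\bigr]$, the extra $A$ coming from the mean of $u$, but this is harmless under the hypothesis).

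However, there is a genuine gap in the pressure step. The two estimates you plan to use, \eqref{p} and \eqref{DDtiti}, carry \emph{no contractive factor} on the pressure term: with the fixed geometric ratio $\rho=2r$ that you impose, each application returns the pressure at the next scale with a constant $C\ge 1$ in front, plus an $A^{1/4}B^{3/4}$ remainder. After $O(\log(R_\star/r))$ iterations out to scale $R_\star$, the pressure contribution is $C^{O(\log(R_\star/r))}$ times the outer $L^{3/2}$ data, which blows up as $r\to 0$. You flag this as ``the technical heart'' but do not say how to beat it, and with \eqref{p} and \eqref{DDtiti} alone it cannot be beaten.

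The fix is already in the paper: work with $p-[p]_{x_0,r}$ instead of $p$ (legitimate since $u$ is divergence-free, so the mean drops out of $\int p\,u\cdot\nabla\phi$) and use the oscillation estimate \eqref{pbar}, which carries the extra decay factor $(r/\rho)$. Choosing the scale ratio $\theta=r/\rho$ small enough makes $C\theta<1$, so the pressure recursion
\[
U(k)\le (C\theta)\,U(k-1)+C\,\theta^{-3/2}A(k-1)^{1/4}B(k-1)^{3/4}
\]
telescopes to a bounded quantity. This is exactly the device used in the paper's proof of Theorem~\ref{VassThmp=1}, and it is what is needed here. Once you replace \eqref{p}/\eqref{DDtiti} by \eqref{pbar} and carry the pressure oscillation $U(k)$ as an additional inductive quantity alongside $A(k)+B(k)$, your scheme closes.
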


We are now ready to prove Theorem \ref{VassThmp=1}.

\begin{proof}[Proof of Theorem \ref{VassThmp=1}] Our assumption is that 
$$A((0,0),1) + B((0,0),1)+ \int_{-1}^{0} \norm{p}_{L^1(B_1(0))}dt\leq \epsilon,$$
where $\epsilon\in(0,1)$ is to be determined.
By Lemma \ref{SS-02}, it is enough to show that 
$$\sup_{0<r<1/4} [A(z,r) + B(z,r)]\leq C\epsilon^{\frac12} \leq \epsilon_\star.$$
for every $z \in Q_{1/2}(0,0)$.  Here $C$ is independent of $r$ and $z$.
 By translation invariance, it suffices to consider the case $z=0$. 
 Moreover, it suffices to show a discrete version, i.e.,  we just need to show that 
\begin{equation}\label{rn}
A((0,0),\theta^n) + B((0,0),\theta^n)\leq  \epsilon^{\frac12}
\end{equation}
for a fixed $\theta\in(0,1/4]$ and for all $n=1,2,\dots$  The discretization enables us to use an inductive argument  in the spirit of \cite[Section 4]{CKN} and \cite{WZ}. 

Let $\theta\in(0,1/4]$ be determined later and define $$r_n=\theta^n , \quad n\in \NN.$$
  By our hypothesis, inequality \eqref{rn} holds in the case $n=1$ provided $\epsilon_0$ is sufficiently small (depending on $\theta$). Suppose now that it holds for $n=1,\dots, m-1$ with an $m\geq 2$. Let 
$\phi_m=\chi\psi_m$, where $0\leq \chi\leq 1$ is a smooth cutoff function which equals 1 on $Q_{\theta^2}(0,0)$ and vanishes in  $\RR^3\times(-\infty,0)\setminus Q_{2\theta/3}(0,0)$, and $\psi_m$ is given by
$$\psi_m(x,t)=(r_m^2-t)^{-3/2} e^{-\frac{|x|^2}{4(r_m^2-t)}}, \qquad\qquad t<r_m^2.$$ 
Then it can be seen that  $\phi_m\geq 0$, $(\partial_t +\Delta) \phi_m =0$ in $Q_{\theta^2}(0,0)$, and 
$$|(\partial_t +\Delta) \phi_m|\leq C \quad {\rm on~} Q_{2\theta/3}(0,0),$$
$$2^{-3/2}\, r_m^{-3}\leq \phi_m\leq  r_m^{-3}, \quad |\nabla \phi_m|\leq C r_m^{-4}  {\rm ~~on~~}  Q_{r_m}(0,0), \qquad m\geq 2,$$
$$\phi_m\leq C r_k^{-3}, \quad |\nabla \phi_m|\leq C r_k^{-4} {\rm ~~on~~}  Q_{r_{k-1}}(0,0)\setminus Q_{r_{k}}(0,0), \qquad 1<k\leq m.$$
Here the constant $C=C(\theta)$ is independent of $m$. 

Using $\phi_m$ as a test function in the generalized energy inequality, we find that 
\begin{equation}\label{ABCIII}
A((0,0), r_m) + B((0,0), r_m)\leq C (I + II+III),  
\end{equation}
where 
$$I=r_m^2 \int_{Q_{\theta}(0,0)} |u|^2 dxdt,$$
$$II=r_m^2\int_{Q_{\theta}(0,0)} |u|^3 |\nabla \phi_m| dx dt,$$
$$III=r_m^2 \left|\int_{Q_{\theta}(0,0)} p(u\cdot\nabla \phi_m) dx dt\right|.$$

By the hypothesis, we have 
$$I\leq  r_m^2 \epsilon\leq  \epsilon^{\frac34}.$$

By the above properties of $\phi_m$, we have
\begin{eqnarray*} 
 II&=& r_m^2 \sum_{k=1}^{m-1} \int_{Q_{r_k}\setminus Q_{r_{k+1}}} |u|^3 |\nabla \phi_m| dx dt + r^2_m\int_{Q_{r_m}} |u|^3 |\nabla \phi_m| dx dt\\
 &\leq& C r_m^2 \sum_{k=1}^{m-1} r_k^{-4} \int_{Q_{r_k}} |u|^3 dxdt.
\end{eqnarray*}
Thus by Lemma \ref{boundC1} and inductive hypothesis, it follows that 
\begin{equation*} 
 II \leq C r_m^2 \sum_{k=1}^{m-1} r_k^{-2} \epsilon^{3/4} \leq C \epsilon^{3/4}.
\end{equation*}

As for the term $III$, we write
$$\phi_m=\chi_1\phi_m=\sum_{k=1}^{m-1} (\chi_k-\chi_{k+1}) \phi_m + \chi_m\phi_m,$$
where $\chi_k$, $k=1,2,\dots,m$, is a smooth cutoff function  such that 
$0\leq\chi_k\leq 1$, $\chi_k=1$ in $Q_{7r_k/8}(0,0)$, $\chi_k=0$ in $\RR^3\times(-\infty,0)\setminus Q_{r_k}(0,0)$, and
$|\nabla \chi_k| \leq C/r_{k}$. Then
\begin{eqnarray*}
III&\leq& r_m^2\left|\sum_{k=1}^{m-1} \int_{Q_{r_k}}p u\cdot \nabla[(\chi_k-\chi_{k+1}) \phi_m] dxdt \right|\\
&& +\, r_m^2 \left| \int_{Q_{r_m}} p u\cdot\nabla(\chi_m \phi_m) dxdt \right|\\
&=& r_m^2\left|\sum_{k=1}^{m-1} \int_{Q_{r_k}} (p-[p]_{0,r_k}) u\cdot \nabla[(\chi_k-\chi_{k+1}) \phi_m] dxdt \right|\\
&& +\, r_m^2 \left| \int_{Q_{r_m}} (p-[p]_{0, r_m}) u\cdot\nabla(\chi_m \phi_m) dxdt \right|,
\end{eqnarray*}
where we used the fact that $u$ is divergence-free. Then by H\"older's inequality and the properties of $\phi_m$, we see that
\begin{eqnarray}
III &\leq& C r_m^2 \sum_{k=2}^{m}  r_k^{-4}  \int_{Q_{r_k}} |(p-[p]_{0,r_k}) u|  dxdt  \nonumber\\
&& +\, C\theta^{-2}   \int_{Q_{\theta}} |(p-[p]_{0,\theta}) u| dxdt\nonumber\\
&\leq& C r_m^2 \sum_{k=2}^{m}  r_k^{-4} \int_{-r_{k}^2}^0 \norm{p-[p]_{0,r_k}}_{L^2(B_{r_k})} \norm{u}_{L^2(B_{r_k})}  dxdt\nonumber\\
&& +\, C \theta^{-2} \int_{-\theta^2}^0 \norm{p-[p]_{0,\theta}}_{L^2(B_{\theta})} \norm{u}_{L^2(B_{\theta})}  dxdt.\nonumber
\end{eqnarray}

By inductive hypothesis, this gives 

\begin{eqnarray}\label{boundIII}
III &\leq& C r_m^2 \sum_{k=2}^{m}  r_k^{-2}\, \epsilon^{\frac14}\, r_k^{-3/2} \int_{-r_{k}^2}^0 \norm{p-[p]_{0,r_k}}_{L^2(B_{r_k})}  dxdt\\
&& +\, C \, \epsilon^{\frac12} \, \theta^{-3/2}\int_{-\theta^2}^0 \norm{p-[p]_{0,\theta}}_{L^2(B_{\theta})} dxdt.\nonumber
\end{eqnarray}
Here the constant $C$ could depend on $\theta$.

We now let $A(k)=A((0,0), r_{k})$, $B(k)=B((0,0), r_{k})$, and  
$$U(k)=r_k^{-3/2} \int_{-r_{k}^2}^0 \norm{p-[p]_{0,r_k}}_{L^2(B_{r_k})}  dxdt.$$
By Lemma \ref{Dbar} and H\"older's inequality,  for $2\leq k\leq m$ we have 
\begin{eqnarray*}
U(k)\leq (C\theta) U(k-1) + C\, \theta^{-3/2} A(k-1)^{1/4} B(k-1)^{3/4},  
\end{eqnarray*}
where $C\geq1$ is independent of $k$ and $\theta$. Choosing $\theta=\frac{1}{4C}$ and iterating this inequality we obtain
$$U(k)=(1/4)^{k-1} U(1) + C\, \theta^{-3/2} \sum_{\ell=1}^{k-1} (1/4)^{\ell-1} A(k-\ell)^{1/4} B(k-\ell)^{3/4}.$$

 Then by inductive hypothesis we find
\begin{eqnarray*}
U(k)&\leq&  U(1) + C \sum_{\ell=1}^{k-1} (1/4)^{\ell-1} \epsilon^{\frac12}\\
&\leq& \theta^{-3/2} \int_{-\theta^2}^0 \norm{p-[p]_{0,\theta}}_{L^2(B_{\theta})} dxdt + C\epsilon^{\frac12}.
\end{eqnarray*}

Combining this with \eqref{boundIII} we arrive at 
$$III\leq C\, \epsilon^{\frac14} \, \theta^{-3/2}\int_{-\theta^2}^0 \norm{p-[p]_{0,\theta}}_{L^2(B_{\theta})} dxdt + C\epsilon^{\frac34},$$
which by Lemma \ref{Dbar} gives 
\begin{eqnarray*}
III&\leq& C \epsilon^{\frac14} [D((0,0),2\theta) + A((0,0), 2\theta)^{1/4}B((0,0), 2\theta)^{3/4}] + C \epsilon^{\frac34}\\
&\leq& C (\epsilon^{\frac54} +\epsilon^{\frac34})\leq 2C\, \epsilon^{\frac34}.
\end{eqnarray*}

Combining \ref{ABCIII} and the estimates for $I$, $II$ and $III$ we obtain
$$A((0,0), r_m) + B((0,0), r_m) \leq C\, \epsilon^{\frac34}\leq \epsilon^\frac12$$
provided $\epsilon$ is small enough. This proves \eqref{rn} and the proof is complete.
\end{proof}

Using Lemma \ref{Dbar} and a covering argument we obtain the following consequence of Theorem \ref{VassThmp=1}.

\begin{corollary}\label{L1-sigma} Let $\sigma\in[0,1]$. There exists a  number $\epsilon\in(0,1)$ with the following property. If $(u,p)$ be a suitable solution to  the Navier-Stokes equations  in $Q_1$ such that 
$$A((0,0),1)+B((0,0),1)+  \int_{-1}^{0}\norm{p}_{L^{-\sigma,2}(B_1(0))} dt\leq \epsilon,$$
then $u$ is regular in $Q_{1/2}$. 
 \end{corollary}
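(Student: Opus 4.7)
The plan is to deduce Corollary \ref{L1-sigma} from Theorem \ref{VassThmp=1} by a translation-and-rescaling argument applied at every interior point $z_0\in Q_{1/2}(0,0)$, using Lemma \ref{Dbar} to convert the given $L^{-\sigma,2}$ control of $p$ into the scale-invariant $L^1$-type pressure quantity demanded by Theorem \ref{VassThmp=1}.

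First, I would record the scale-invariant reformulation of Theorem \ref{VassThmp=1}: applying it to the usual Navier-Stokes rescaling $\tilde u(y,s)=\rho\, u(x_0+\rho y,t_0+\rho^2 s)$, $\tilde p(y,s)=\rho^2 p(x_0+\rho y,t_0+\rho^2 s)$ on $Q_1(0,0)$, a direct computation shows the three hypothesis terms transform into $A(z_0,\rho)$, $B(z_0,\rho)$, and $\rho^{-3}\int_{t_0-\rho^2}^{t_0}\norm{p}_{L^1(B_\rho(x_0))}\, dt$ respectively. Hence there is a universal $\epsilon_\star>0$ such that whenever this sum at scale $(z_0,\rho)$ does not exceed $\epsilon_\star$, Theorem \ref{VassThmp=1} yields $u\in L^\infty(Q_{\rho/2}(z_0))$.

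Now fix $z_0=(x_0,t_0)\in Q_{1/2}(0,0)$ and choose the two scales $\rho=1/2$ and $r=1/4$, so that the condition $r\leq\rho/2$ of Lemma \ref{Dbar} is satisfied and $Q_{1/2}(z_0)\subset Q_1(0,0)$. Since $B_{1/2}(x_0)\subset B_1(0)$ and $(t_0-1/4,t_0)\subset(-1,0)$, the obvious monotonicity of $A$, $B$, and of $\norm{\cdot}_{L^{-\sigma,2}(\cdot)}$ in the underlying ball yields
\[
A(z_0, 1/4)+B(z_0, 1/4)+A(z_0, 1/2)+B(z_0, 1/2)\leq C\epsilon,
\]
\[
\int_{t_0-1/4}^{t_0}\norm{p}_{L^{-\sigma,2}(B_{1/2}(x_0))}\, dt\leq \int_{-1}^0 \norm{p}_{L^{-\sigma,2}(B_1(0))}\, dt \leq \epsilon.
\]
Inequality \eqref{DDtiti} of Lemma \ref{Dbar} at $(z_0, r=1/4,\rho=1/2)$ then gives
\[
(1/4)^{-3}\int_{t_0-1/16}^{t_0}\norm{p}_{L^1(B_{1/4}(x_0))}\, dt\leq C\,\epsilon + C\, A(z_0,1/2)^{\frac14+\frac\sigma2}B(z_0,1/2)^{\frac34-\frac\sigma2}\leq C\,\epsilon,
\]
where the second term is absorbed using that the two exponents sum to $1$, so the product of $A$ and $B$ factors is at most $A(z_0,1/2)+B(z_0,1/2)\leq C\epsilon$.

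Combining these estimates, the total scale-invariant quantity
\[
A(z_0, 1/4)+B(z_0, 1/4)+(1/4)^{-3}\int_{t_0-1/16}^{t_0}\norm{p}_{L^1(B_{1/4}(x_0))}\, dt
\]
does not exceed $C\epsilon$. Choosing $\epsilon$ so small that $C\epsilon\leq\epsilon_\star$, the scale-invariant form of Theorem \ref{VassThmp=1} applies at scale $(z_0,1/4)$ and produces $u\in L^\infty(Q_{1/8}(z_0))$. Since $z_0\in Q_{1/2}(0,0)$ was arbitrary, this shows $u$ is regular throughout $Q_{1/2}$. The only step requiring care is the scale-invariant form of Theorem \ref{VassThmp=1}: the $L^1$ pressure integral in the original hypothesis is not itself scale-invariant at the level of $\beta=1$, but the computation above shows the correct scale-invariant quantity is precisely $\rho^{-3}\int\norm{p}_{L^1(B_\rho)}\, dt$, which matches exactly the left-hand side of \eqref{DDtiti}. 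That matching is what makes Lemma \ref{Dbar} the right tool and closes the argument.
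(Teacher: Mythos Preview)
Your proof is correct and follows precisely the route the paper indicates: the paper's entire argument is the one-line ``Using Lemma \ref{Dbar} and a covering argument we obtain the following consequence of Theorem \ref{VassThmp=1},'' and you have supplied exactly those details, invoking inequality \eqref{DDtiti} at each interior point to pass from $L^{-\sigma,2}$ control of $p$ to the scale-invariant $L^1$ quantity, then applying the rescaled Theorem \ref{VassThmp=1}. The monotonicity of $\norm{\cdot}_{L^{-\sigma,2}(B)}$ in the ball (which you use) holds because enlarging $B$ enlarges the class of test functions in the defining supremum.
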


Finally, we prove Theorem \ref{PhThm-sigma}.
\begin{proof}[Proof of Theorem \ref{PhThm-sigma}] By H\"older's inequality it follows that 
$$\int_{-1}^{0}\norm{p}_{L^{-\sigma,2}(B_1(0))} dt\leq D_\sigma((0,0),1)^{\frac{2-\sigma}{2}}.$$
 
Thus by Corollary \ref{L1-sigma}, Proposition \ref{ABcontrol0}, and a covering argument we obtain Theorem \ref{PhThm-sigma}.
\end{proof}


\begin{thebibliography}{xx}
\bibitem{CKN} L. Caffarelli, R.-V. Kohn, and L. Nirenberg, {\it Partial regularity of suitable weak solutions of
the Navier-Stokes equations}, Comm. Pure Appl. Math. {\bf 35} (1982), 771--831.

%\bibitem{CSTY1} C.-C. Chen, R. M. Strain, T.-P. Tsai, and H.-T. Yau, {\it Lower bounds on the blow-up rate of the axisymmetric Navier-Stokes equations
%II}, Comm. Partial Differential Equations, {\bf 34} (2009), 203--232.

%\bibitem{CSTY2} C.-C. Chen, R. M. Strain, H.-T. Yau, and T.-P. Tsai, {\it Lower bound on the blow-up rate of the axisymmetric Navier-Stokes equations},
%Int. Math. Res. Not. IMRN,  2008, no. 9, Art. ID rnn016, 31 pp.

\bibitem{CL}  H. J. Choe and J. L. Lewis, {\it On the singular set in the Navier-Stokes equations}, J. Funct. Anal. {\bf 175} (2000),  348-369.

\bibitem{ESS1} L. Escauriaza,  G. Seregin, and V. {\v S}ver\'ak,  {\it $L_{3, \infty}$-solutions of Navier-Stokes equations and backward uniqueness}, (Russian) Uspekhi Mat. Nauk {\bf 58} (2003),  3--44; 
translation in Russian Math. Surveys {\bf 58} (2003),  211--250.

%\bibitem{ESS2} L. Escauriaza, G. Seregin, and V. {\v S}ver\'ak, {\it Backward uniqueness for parabolic equations}, 
%Arch. Ration. Mech. Anal. {\bf 169} (2003),  147--157.

%\bibitem{GKP} I. Gallagher, G.  Koch, and F. Planchon,  {\it A profile decomposition approach to the $L^\infty_t(L^3_x)$ Navier-Stokes regularity 
%criterion}, Math. Ann. {\bf 355} (2013),  1527--1559.


%\bibitem{GS} Y. Giga and H. Sohr, {\it Abstract $L^p$-estimates for the Cauchy problem with applications to
%the Navier-Stokes equations in exterior domains}, J. Funct. Anal. {\bf 102} (1991), 72--94.

\bibitem{Giu} E. Giusti, {\it Direct methods in the calculus of variations}, World Scientific Publishing Co., Inc., River Edge, NJ, 2003.

\bibitem{GKT} S. Gustafson, K. Kang, and T.-P. Tsai, {\it Interior regularity criteria for suitable weak solutions of the Navier-Stokes equations}, Comm. Math. Phys. {\bf 273} (2007),  161--176. 

\bibitem{Hop} E. Hopf, {\it \"Uber die Anfangswertaufgabe f\"ur die hydrodynamischen Grundgleichungen},
Math. Nachr. {\bf 4} (1951), 213--231.

 
%\bibitem{Kat} T. Kato, {\it Strong $L^p$-solutions of the Navier-Stokes equations in $R^m$ with applications to
%weak solutions}, Math. Z. {\bf 187} (1984), 471--480.



%\bibitem{KeKo} C.  Kenig and G. Koch, {\it An alternative approach to the Navier-Stokes 
%equations in critical spaces}, Ann. Inst. H. Poincar\'e Anal. Non Lin\'eaire {\bf 28} (2011),  159--187.

\bibitem{KPV} C. E. Kenig, G. Ponce, and L. Vega, {\it Well-posedness and scattering results for the generalized
Korteweg-de-Vries equation via the contraction principle}, Comm. Pure App. Math. {\bf 46} (1993),  527--620.

%\bibitem{KK} H. Kim and H. Kozono, {\it Interior regularity criteria in weak spaces for the Navier-Stokes equations},
% Manuscripta Math. {\bf 115} (2004), 85--100.

%\bibitem{KS} H. Kozono and H. Sohr, {\it Remark on uniqueness of weak solutions to the Navier-Stokes
%equations}, Analysis {\bf 16} (1996), 255--271. 

\bibitem{Kuka} I. Kukavica, {\it Regularity for the Navier-Stokes equations with a solution in a Morrey space}, Indiana Univ. Math. J. {\bf 57} (2008),  2843--2860. 

%\bibitem{Lad1} O. A. Ladyzhenskaya, {\it Mathematical problems of the dynamics of viscous incompressible
%fluids}, Gos. Izdat. Fiz.-Mat. Lit., Moscow 1961; 2nd rev. aug. ed. of English transl.,
%{\it The mathematical theory of viscous incompressible flow}, Gordon and Breach, 
%New York-London 1969.

%\bibitem{Lad} O.  Ladyzhenskaya, {\it On uniqueness and smoothness of generalized solutions to the Navier-Stokes 
%equations}, Zap. Nauchn. Sem. Leningrad. Otdel. Mat. Inst. Steklov. (LOMI) {\bf 5}
%(1967), 169--185; English transl., Sem. Math. V.A. Steklov Math. Inst. Leningrad {\bf 5} (1969),
%60--66.

\bibitem{LS} O.  Ladyzhenskaya and G.  Seregin, {\it On partial regularity of suitable weak solutions to
the three-dimensional Navier-Stokes equations}, J. Math. Fluid Mech. {\bf 1} (1999), 356--387.

\bibitem{Ler} J. Leray, {\it Sur le mouvement d'un liquide visqueux emplissant l'espace}, Acta Math. {\bf 63}
(1934), 193--248.

\bibitem{Lin} F.-H. Lin, {\it A new proof of the Caffarelli-Kohn-Nirenberg theorem}, Comm. Pure Appl.
Math. {\bf 51} (1998), 241--257.

%\bibitem{LT} Y. Luo and T.-P. Tsai,  {\it Regularity criteria in weak $L^3$ for $3D$ incompressible
%Navier-Stokes equations}, Preprint 2014. arXiv:1310.8307v5.

%\bibitem{MS} P. Maremonti and V. A. Solonnikov, {\it On estimates for the solutions of the nonstationary
%Stokes problem in anisotropic Sobolev spaces with a mixed norm}, Zap. Nauchn. Sem.
%S.-Peterburg. Otdel. Mat. Inst. Steklov. (POMI) {\bf 222} (1995), 124--150; English transl.,
%J. Math. Sci. (New York) {\bf 87} (1997), 3859--3877.

\bibitem{NRS} J. Ne{\v c}as, M. R{\r u}{\v z}i{\v c}ka, and V. {\v S}ver\'ak {\it On Leray's self-similar solutions of the Navier-Stokes
equations}, Acta Math. {\bf 176} (1996), 283--294.

\bibitem{Phuc} N. C. Phuc, {\it Navier-Stokes equations in nonendpoint borderline Lorentz spaces}, J. Math. Fluid Mech. {\bf 17} (2015),  741--760. 

\bibitem{PhTo} N. C. Phuc and M. Torres, {\it Characterizations of signed measures in the dual of BV and related isometric isomorphisms}. To appear in Annali della Scuola Normale Superiore di Pisa, Classe di Scienze (5). 

%\bibitem{Pro} G. Prodi, {\it Un teorema di unicit\`a per le equazioni di Navier-Stokes}, Ann. Mat. Pura Appl.
%(4) {\bf 48} (1959), 173--182.

%\bibitem{Sche1} V. Scheffer, {\it Partial regularity of solutions to the Navier-Stokes equations}, Pacific J. Math.
%{\bf 66} (1976), 535--552.

\bibitem{Sche2} V. Scheffer, {\it Hausdorff measure and the Navier-Stokes equations}, Comm. Math. Phys. {\bf 55}
(1977), 97--112.

%\bibitem{Sche3} V. Scheffer, {\it The Navier-Stokes equations in a bounded domain}, Comm. Math. Phys. {\bf 73}
%(1980), 1--42.

%\bibitem{Sche4} V. Scheffer, {\it Boundary regularity for the Navier-Stokes equations in a half-space}, Comm.
%Math. Phys. {\bf 85} (1982), 275--299.


\bibitem{SS} G. Seregin and V. {\v S}ver\'ak, {\it Navier-Stokes equations with lower bounds on the pressure}, Arch. Ration. Mech. Anal. {\bf 163} (2002),  65--86. 

%\bibitem{SS2} G. Seregin and V. {\v S}ver\'ak, {\it On type I singularities of the local axi-symmetric
%solutions of the Navier-Stokes equations}, Comm. Partial Differential Equations,
%{\bf 34} (2009), 171--201.

%\bibitem{Sere} G. Seregin, {\it A certain necessary condition of potential blow up for Navier-Stokes equations},
% Commun. Math. Phys. {\bf 312} (2012), 833--845. 

%\bibitem{Sere2} G. Seregin, {\it Selected topics of local regularity theory for the Navier-Stokes equations}. Topics in mathematical 
%fluid mechanics, 239--313, Lecture Notes in Math. {\bf 2073}, Springer, Heidelberg, 2013.

%\bibitem{Ser1} J. Serrin, {\it On the interior regularity of weak solutions of the Navier-Stokes equations},
%Arch. Ration. Mech. Anal. {\bf 9} (1962), 187--195.

%\bibitem{Ser2} J. Serrin, {\it The initial value problem for the Navier-Stokes equations}, Nonlinear Problems
%(R. Langer, ed.), Univ. of Wisconsin Press, Madison 1963, pp. 69--98

%\bibitem{Soh} H. Sohr, {\it A regularity class for the Navier-Stokes equations in Lorentz spaces}, J.
%Evol. Equ. {\bf 1} ( 2001), 441--467.

%\bibitem{Sol} V. A. Solonnikov, {\it Estimates of the solutions of a nonstationary linearized system of
%Navier-Stokes equations}, Trudy Mat. Inst. Steklov. {\bf 70} (1964), 213--317; English transl., Amer.
%Math. Soc. Transl. (2) {\bf 75} (1968), 1--116.



%\bibitem{Stru} M. Struwe, {\it On partial regularity results for the Navier-Stokes equations}, Comm. Pure
%Appl. Math. {\bf 41} (1988), 437--458.

%\bibitem{Tak} S. Takahashi, {\it On interior regularity criteria for weak solutions of the Navier--Stokes
%equations}, Manuscripta Math. {\bf 69} (1990), 237--254.

\bibitem{Vass}  A. F. Vasseur, {\it A new proof of partial regularity of solutions to Navier-Stokes equations}, NoDEA Nonlinear Differential Equations Appl. {\bf 14} (2007),  753--785.

\bibitem{WZ} W. Wang and Z. Zhang, {\it On the interior regularity criteria and the number of singular points to the Navier-Stokes equations},  J. Anal. Math. {\bf 123} (2014), 139--170. 

\end{thebibliography}
\end{document}